\renewcommand{\textsc}{\textcolor{red}}
\newtheorem{theorem}{\rm\bf Theorem}[section]
\newtheorem{proposition}[theorem]{\rm\bf Proposition}
\newtheorem{lemma}[theorem]{\rm\bf Lemma}
\newtheorem{corollary}[theorem]{\rm\bf Corollary}
\newtheorem*{theorem 1}{\rm\bf Proposition 1}
\newtheorem*{theorem 2}{\rm\bf Proposition 2}
\theoremstyle{definition}
\newtheorem{definition}[theorem]{\rm\bf Definition}
\theoremstyle{remark}
\newtheorem{remark}[theorem]{\rm\bf Remark}
\newtheorem{example}[theorem]{\rm\bf Example}
\def\interieur#1{\mathord{\mathop{\kern 0pt #1}\limits^\circ}}
\title[Teichm\"uller spaces of surfaces with boundary]{On length spectrum metrics and weak metrics on Teichm\"uller spaces of surfaces with boundary}
\author{Lixin Liu}
\address{Lixin Liu, Department of Mathematics, Zhongshan University, 510275, Guangzhou, P. R. China}
\email{mcsllx@mail.sysu.edu.cn}
\author{Athanase Papadopoulos}
\address{Athanase Papadopoulos, Max-Plank-Institut f\"ur Mathematik, Vivatsgasse 7, 53111 Bonn, Germany ;  Institut de Recherche Math{\'e}matique Avanc\'ee,
Universit{\'e} de Strasbourg and CNRS,
7 rue Ren\'e Descartes,
 67084 Strasbourg Cedex, France} \email{papadopoulos@math.u-strasbg.fr}
\date{\today}
\author{Weixu Su}
\address{Weixu SU, Department of Mathematics, Zhongshan University, 510275, Guangzhou, P. R. China}
\email{su023411040@163.com}
\author{Guillaume Th\'eret}
\address{Guillaume Th\'eret, Max-Plank-Institut f\"ur Mathematik, Vivatsgasse 7, 53111 Bonn, Germany}
\email{theret@mpim-bonn.mpg.de}
\begin{document}

\begin{abstract}    

We define and study metrics and weak metrics on the Teichm\"uller space of a surface   of topologically finite type with boundary. 
These metrics and weak metrics are associated to the hyperbolic length spectrum of simple closed curves and of properly embedded arcs in the surface. We give a comparison between the defined metrics on regions of Teichm\"uller space which we call    {\it $\varepsilon_0$-relative $\epsilon$-thick parts}, for $\epsilon >0$ and $\varepsilon_0\geq \epsilon>0$.

\bigskip

\noindent AMS Mathematics Subject Classification:   32G15 ; 30F30 ; 30F60.
\medskip

\noindent Keywords: Teichm\"uller space, length spectrum metric, length spectrum weak metric, Thurston's asymmetric metric.
\medskip

\noindent  Lixin Liu was partially supported by NSFC (No.
10871211).
 
\end{abstract}
\maketitle
\tableofcontents

\section{Introduction}
\label{intro}

In this paper, $S$ is a connected oriented surface of finite topological type whose  boundary is nonempty unless specifically specified. 
More precisely, $S$ is obtained from a closed surface of genus $g\geq 0$ by removing a finite number $p\geq 0$ of punctures 
and a finite number $b\geq 1$ of disjoint open disks. 
We shall say that $S$ has $p$ punctures and $b$ boundary components. 
The Euler characteristic of $S$ is equal to $\chi(S)= 2-2g-p-b$, and we assume throughout the paper that $\chi(S)<0$.
The boundary of $S$ is denoted by $\partial S$ and by assumption we have $\partial S\not=\emptyset$.

We shall equip $S$ with complete hyperbolic structures of finite area with totally geodesic boundary, 
and by this we mean that the following two properties are satisfied:
\begin{enumerate}
\item each puncture has a neighborhood which is isometric to a cusp, i.e., 
the quotient of $\{z=x+iy \in\mathbb{H}^{2}\  | \ a < y \}$, for some $a>0$, by the group generated by the translation $z\mapsto z+1$ ;
\item each boundary component is a smooth simple closed geodesic. 
\end{enumerate}

Let $S^d= S\cup \bar{S}$ denotes the double of $S$,  obtained by taking a copy, $\bar{S}$, of $S$ and by identifying the corresponding boundary components by 
an orientation-reversing homeomorphism.
The surface $S^d$ carries a canonical orientation-reversing involution, whose fixed point set is the boundary $\partial S$ of $S$, considered as embedded in $S^d$.
The oriented closed surface $S^d$ has genus $2g+b-1$ and $2p$ punctures.

If $S$ is equipped with a hyperbolic structure, we shall often equip $S^d$ with the \textsl{doubled hyperbolic structure}, that is, 
with the unique hyperbolic structure on $S^{d}$ that restricts to the structure on $S\subset S^d$ we started with,
and that makes the canonical involution of $S^d$ an isometry.

We denote by $\mathcal {T}(S)$ the reduced Teichm\"{u}ller space of marked hyperbolic structures on $S$. 
Recall that $\mathcal {T}(S)$ is the set of equivalence classes of pairs $(X, f)$, where $X$ is a hyperbolic surface (of the type we consider here) and
$f : S\rightarrow X$ is a homeomorphism (called the marking), and where $(X_1, f_1)$ is said to be equivalent to $(X_2, f_2)$ if there is an isometry 
$h : X_1\rightarrow X_2$ which is homotopic to $f_2\circ f_1^{-1}$. 
We recall that in this reduced theory, homotopies need not fix the boundary of $S$ pointwise. 
Since all Teichm\"uller spaces that we consider are reduced, we shall omit the word ``reduced" in our exposition. 
Furthermore, we shall denote an element $(X, f)$ of $\mathcal {T}(S)$ by $X$, without explicit reference to the marking.
We shall also denote any representative of an element $X$ of $\mathcal {T}(S)$ by the same letter, if no confusion arises.
If $X$ is a hyperbolic structure on $S$, we shall denote by $X^d$ the doubled structure on $S^d$.
 
 A simple closed curve on $S$ is said to be {\it peripheral} if it is homotopic to a puncture. 
It is said to be {\it essential} if it is not peripheral, and if it is not homotopic to a point (but it can be homotopic to a boundary component).

We let $\mathcal{C}=\mathcal{C}(S)$ be the set of homotopy classes of essential simple closed curves on $S$. 

An {\it arc} in $S$ is the homeomorphic image of a closed interval which is properly embedded in $S$ 
(that is, the interior of the arc is in the interior of $S$ and endpoints of the arc are on the boundary of $S$). 
All homotopies of arcs that we consider are relative to $\partial S$, that is, they leave the endpoints of arcs on the set $\partial S$ 
(but they do not necessarily fix pointwise the points on the boundary).
An arc is said to be {\it essential} if it is not homotopic (relative to $\partial S$) to a subset of $\partial S$. 

We let $\mathcal{B}=\mathcal{B}(S)$ be the set of homotopy classes
of essential arcs on $S$ union the set of homotopy classes of simple closed curves which are homotopic to boundary components. 
Note that $\mathcal{B}\cap\mathcal{C}$ consists in all components of the boundary of $S$.

For any $\gamma\in \mathcal{B}\cup\mathcal{C}$ and for any hyperbolic structure $X$, we let $\gamma^X$ be the geodesic representative of $\gamma$ 
(that is, the curve of shortest length in the homotopy class relative to $\partial S$). 
The geodesic $\gamma^X$ is unique, and it is orthogonal to $\partial S$ at each intersection point, in the case where $\gamma$ is an equivalence class of an arc. 
We denote by $l_X(\gamma)$ the length of $\gamma^X$ with respect to the hyperbolic metric considered. This
  length only depends upon the class of $X$ in  $\mathcal{T}(S)$.

We denote by $\mathcal{ML}(S)$ the space of measured geodesic laminations on $S$, whenever $S$ is equipped with a hyperbolic metric. 
This space is equipped with the topology defined by Thurston (cf. \cite{Thurston-notes}). 
We also recall that there are natural homeomorphisms between the various spaces $\mathcal{ML}(S)$ when the hyperbolic structure on $S$ varies, 
so that it is possible to talk about a measured geodesic lamination on $S$ without referring to a specific hyperbolic structure on the surface.

A  measured lamination (respectively, hyperbolic structure, simple closed curve, etc.) on $S^d$ is said to be {\it symmetric} if it is invariant by the canonical involution.\\

We conclude these preliminaries by recalling two trigonometric formulae from hyperbolic geometry that will be useful in the sequel.

The first useful formula concerns hyperbolic right-angled hexagons (that is, hexagons in the hyperbolic plane whose angles are right angles). 
Let $a,c',b,a',c,b'$ be the lengths of the consecutive edges of such a hexagon, cf. Figure \ref{hexagon2}.  
Then, we have
\begin{equation}\label{eq:hexagon1}
\cosh a = -\cosh b\cosh c+\sinh b\sinh c\cosh a'.
\end{equation}
This formula allows to express $a'$ in terms of $a,b,c$, and it implies in particular that the isometry type of a right-angled hexagon 
is determined by the length of any three non-consecutive edges. 

\begin{figure}[!hbp]
\psfrag{a}{$a$}
\psfrag{b}{$b$}
\psfrag{c}{$c$}
\psfrag{a'}{$a'$}
\psfrag{b'}{$b'$}
\psfrag{c'}{$c'$}
\centering
\includegraphics[width=.4\linewidth]{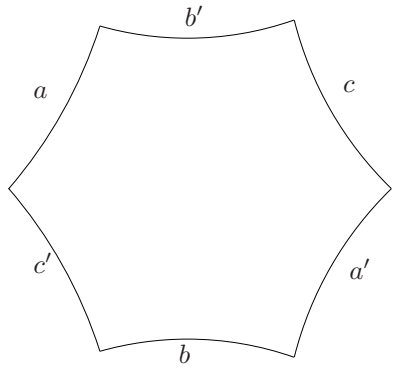}
\caption{\small{A right-angled hexagon in hyperbolic space.}}
\label{hexagon2}
\end{figure}

The second useful formula gives a relation between the lengths of edges of a right-angled pentagon in the hyperbolic plane.

We consider a pentagon with five right angles, with consecutive edges of lengths  $a,c',b,c,b'$, as in Figure \ref{eq:pentagon1}. Then, we have
\begin{equation}\label{eq:pentagon1}
\cosh a = \sinh b\sinh c.
\end{equation}

\begin{figure}[!hbp]
\psfrag{a}{$a$}
\psfrag{b}{$b$}
\psfrag{c}{$c$}
\psfrag{b'}{$b'$}
\psfrag{c'}{$c'$}
\centering
\includegraphics[width=.4\linewidth]{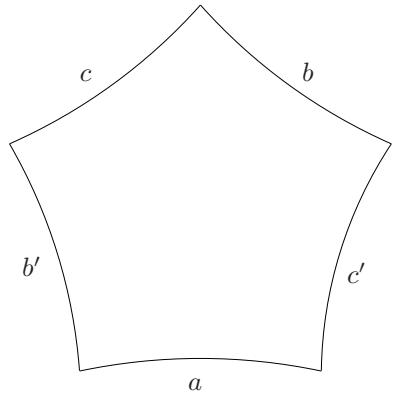}
\caption{\small{A right-angled pentagon in hyperbolic space.}}
\label{pentagon1}
\end{figure}

For the proofs of these formulae, we refer to \cite{Fenchel} pp. 85 and 86.

The goal of this paper is to introduce and study some metrics and weak metrics on the Teichm\"uller space $\mathcal {T}(S)$. 
These metrics and weak metrics are defined using  the hyperbolic length spectrum of simple closed curves and of properly embedded arcs in $S$.

Considering surfaces with boundary is important for several reasons, one of them being that there are inclusion maps between surfaces with boundary that induce maps between various associated spaces of geometric strutures on these surfaces (Teichm\"uller spaces, measured laminations spaces, etc.), and of course this phenomenon does not occur if we restrict the theory to surfaces without boundary.

One can introduce on surfaces with boundary objects that are analogous to objects on surfaces without boundary, for instance, analogs of Thurston's asymmetric weak metrics,  studied in \cite{Thurston1998}, but it turns out that there are interesting differences that we think are worth studying carefully, and in some sense this is what we do in this paper.

\section{Weak metrics on $\mathcal{T}(S)$}
\label{section:weak}

\begin{definition}[Weak metric]
A {\it weak metric} is a structure
satisfying the axioms of a metric space except possibly the symmetry axiom. 
In other words, a weak metric on a set $M$ is a function
$\delta:M\times M\to [0,\infty)$   satisfying
\begin{enumerate}[(a)]
 \item $\delta(x,y)=0\iff x=y$ for all $x$ an $y$ in $M$;
\item  $\delta(x,y)+\delta(y,z)\geq \delta(x,z)$ for
all $x$, $y$ and $z$ in $M$.
\end{enumerate}
We shall say that the weak metric $\delta$ is {\it asymmetric} if the following holds:
 \medskip
 
\noindent (c)  there exist $x$ and $y$ in $M$ satisfying 
$\delta(x,y)\not=\delta(y,x)$.
\end{definition}

We consider the following two functions on $\mathcal{T}(S)\times \mathcal{T}(S)$:

\begin{equation}
\label{eq:def1} 
d(X,Y) = \log \sup_{\alpha \in  \mathcal {B}\cup  \mathcal {C}}\frac{l_Y(\alpha)}{l_X(\alpha)}
\end{equation}

\begin{equation}
\label{eq:def2}
\overline{d}(X,Y) = \log \sup_{\alpha \in \mathcal {B}\cup  \mathcal {C}}\frac{l_X(\alpha)}{l_Y(\alpha)}.
\end{equation}

(Note that $\overline{d}(X,Y)=d(Y,X)$, for all $X,Y\in\mathcal{T}(S)$.)

These two functions are analogues, for surfaces with boundary, of asymmetric weak metrics introduced by Thurston in \cite{Thurston1998} for surfaces of finite type without boundary. 
The introduction of the set $\mathcal{B}$ in the definitions of $d$ and $\overline{d}$ is natural when dealing with surfaces with nonempty boundary, 
and it turns out to be essential in what follows.

\subsection{The functions $d$ and $\overline{d}$ are asymmetric weak metrics}

We consider a hyperbolic structure $X$ on $S$. 
We use the same letter $X$ to denote the corresponding element in $\mathcal{T}(S)$. 
Likewise, we take an element $\alpha\in\mathcal{B}(S)$, and we use the same letter $\alpha$ to denote its geodesic representative.

Let $\bar{\alpha}$ be the image of $\alpha$ by the canonical involution
of $S^d$ and let $\alpha^d = \alpha \cup \bar{\alpha}$.  

\begin{lemma}
\label{le:d} 
The curve $\alpha^d$ is a symmetric simple closed geodesic in $S^d$, and we have $l_{X^d}(\alpha^d) = 2l_X(\alpha)$.
\end{lemma}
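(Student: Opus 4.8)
The plan is to exploit the fact that the doubled surface $X^d$ is built by isometrically gluing $X$ to its mirror copy $\bar{S}$ (carrying $\bar{X}$) along $\partial S$, and that the canonical involution $\sigma$ of $S^d$ realizing $\bar\alpha=\sigma(\alpha)$ acts as an isometry of $X^d$ whose fixed-point set is exactly $\partial S$. First I would record two consequences of this. Since $\partial S$ is the fixed locus of a nontrivial isometry, each of its components is a closed geodesic of $X^d$. Consequently $X$ and $\bar{X}$ sit inside $X^d$ as totally geodesic subsurfaces, their common boundary $\partial S$ being geodesic, so the geodesic arc $\alpha\subset X$ and its mirror $\bar\alpha=\sigma(\alpha)\subset\bar{X}$ are geodesic arcs of $X^d$ as well.

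Next I would check that $\alpha^d=\alpha\cup\bar\alpha$ is a simple closed curve. The arc $\alpha$ is embedded and meets $\partial S$ only at its two endpoints, say $p$ and $q$; applying the isometry $\sigma$, the same holds for $\bar\alpha$, which meets $\partial S$ only at $p$ and $q$. Since the interior of $\alpha$ lies in the interior of $X$ and the interior of $\bar\alpha$ lies in the interior of $\bar{X}$, the two arcs meet exactly in the set $\{p,q\}$, so their union is a single embedded closed loop. Symmetry is then immediate: $\sigma(\alpha^d)=\sigma(\alpha)\cup\sigma(\bar\alpha)=\bar\alpha\cup\alpha=\alpha^d$.

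The heart of the argument, and the step I expect to require the most care, is showing that $\alpha^d$ is genuinely a geodesic, i.e. that it has no corner at the two crossing points $p,q$. On the interiors of $\alpha$ and $\bar\alpha$ there is nothing to prove, by the first paragraph. At $p$, the defining property of the geodesic representative of an arc recalled in the excerpt is that $\alpha$ meets $\partial S$ orthogonally; hence its unit tangent $u$ at $p$ is perpendicular to the geodesic $\partial S$. Because $\sigma$ is the reflection fixing $\partial S$ pointwise, its differential $d\sigma_p$ is the reflection of $T_pX^d$ across the tangent line to $\partial S$, so it sends the perpendicular vector $u$ to $-u$. Tracking the traversal of the loop — arriving at $p$ along $\alpha$ with velocity $u$, then leaving along $\bar\alpha$ parametrized as $s\mapsto\sigma(\alpha(L-s))$ — the outgoing velocity is $-d\sigma_p(u)=u$, equal to the incoming velocity. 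Thus the two tangent directions are collinear, $\alpha^d$ is $C^1$ (hence smooth) across $p$, and likewise across $q$. A curve that is geodesic on each piece with matching tangents at the junctions is a geodesic, so $\alpha^d$ is a closed geodesic; being simple and of positive length, it is the geodesic representative of its free homotopy class.

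Finally, the length computation follows at once from $\sigma$ being an isometry: $l_{X^d}(\alpha^d)=l_{X^d}(\alpha)+l_{X^d}(\bar\alpha)$, and $l_{X^d}(\bar\alpha)=l_{X^d}(\sigma(\alpha))=l_{X^d}(\alpha)=l_X(\alpha)$, which gives $l_{X^d}(\alpha^d)=2\,l_X(\alpha)$, as claimed.
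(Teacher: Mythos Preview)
Your proof is correct and follows exactly the approach of the paper's own one-line argument, which reads in its entirety: ``This simply follows from the fact that $\alpha^d$ intersects the boundary of $S$ perpendicularly, and that it is invariant under the canonical involution of $S^d$.'' You have simply unpacked these two observations in full detail: the orthogonality of $\alpha$ to $\partial S$ is what makes $\alpha^d$ smooth across the junction points, and the involution being an isometry gives both the symmetry and the length doubling.
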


\begin{proof} 
This simply follows from the fact that $\alpha^d$ intersects the boundary of $S$ perpendicularly, and that it is invariant under the canonical involution of $S^d$.
\end{proof}

\begin{corollary}
\label{cor:ineq}
Given two hyperbolic structures $X$ and $Y$ on $S$, we have 
\[\sup_{\gamma\in\mathcal{C}(S)\cup \mathcal{B}(S)}\frac{l_{X}(\gamma)}{l_{Y}(\gamma)}
\leq\sup_{\gamma\in\mathcal{C}(S^d)}\frac {l_{X^d}(\gamma)}{l_{Y^d}(\gamma)}.\]
\end{corollary}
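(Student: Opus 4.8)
The plan is to define, for every $\gamma\in\mathcal{C}(S)\cup\mathcal{B}(S)$, an essential simple closed curve $\gamma^*$ on the double $S^d$ for which the ratio of lengths under the doubled structures reproduces the original ratio, namely $l_{X^d}(\gamma^*)/l_{Y^d}(\gamma^*)=l_X(\gamma)/l_Y(\gamma)$. Granting this, every ratio appearing on the left-hand side also appears among the ratios on the right-hand side, where the supremum is taken over all of $\mathcal{C}(S^d)$; hence the left-hand supremum is bounded above by the right-hand one. The inequality is in general strict precisely because $\mathcal{C}(S^d)$ contains curves that are not symmetric and need not arise as some $\gamma^*$.

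To define $\gamma^*$ I distinguish two cases. If $\gamma\in\mathcal{B}(S)$ is an essential arc, I take $\gamma^*=\gamma^d=\gamma\cup\bar\gamma$; by Lemma \ref{le:d}, $\gamma^d$ is a symmetric simple closed geodesic of $S^d$ with $l_{X^d}(\gamma^d)=2l_X(\gamma)$ and $l_{Y^d}(\gamma^d)=2l_Y(\gamma)$, so the factor $2$ cancels and the two ratios coincide. If instead $\gamma\in\mathcal{C}(S)$ is a simple closed curve, I take $\gamma^*=\gamma$ itself. Here the doubled metric $X^d$ restricts to $X$ on $S\subset S^d$, and the boundary $\partial S$ is the totally geodesic fixed-point set of the canonical involution; consequently the geodesic representative of $\gamma$ for $X$ --- whether it lies in the interior of $S$ or is (homotopic to) a boundary geodesic --- is also its geodesic representative for $X^d$. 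Thus $l_{X^d}(\gamma)=l_X(\gamma)$ and $l_{Y^d}(\gamma)=l_Y(\gamma)$, and again the ratios agree.

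The step that requires genuine care, and which I regard as the crux, is checking that each $\gamma^*$ really belongs to $\mathcal{C}(S^d)$, that is, that it is essential (neither contractible nor homotopic to a puncture) on the closed surface $S^d$. This rests on the incompressibility of $S$ in its double: the inclusion $S\hookrightarrow S^d$ is injective on fundamental groups, so an essential curve or arc of $S$ cannot become inessential in $S^d$. For a simple closed curve this says that it still bounds no disk and encircles no puncture of $S^d$; for the double $\gamma^d$ of an essential arc, it is a nontrivial loop meeting $\partial S$ transversally, hence neither null-homotopic nor freely homotopic into a cusp of $S^d$. With this verified, the assignment $\gamma\mapsto\gamma^*$ maps $\mathcal{C}(S)\cup\mathcal{B}(S)$ into $\mathcal{C}(S^d)$, and passing to suprema yields the asserted inequality.
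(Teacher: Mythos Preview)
Your proof is correct and follows essentially the same approach as the paper's: for each $\gamma$ on $S$ you exhibit a curve on $S^d$ whose length ratio under $X^d,Y^d$ equals $l_X(\gamma)/l_Y(\gamma)$, then pass to the supremum. The paper invokes Lemma~\ref{le:d} uniformly and writes $\gamma^d$ for every $\gamma\in\mathcal{C}(S)\cup\mathcal{B}(S)$, whereas you split into cases, using $\gamma^d$ only for arcs and taking $\gamma$ itself for closed curves; your version is slightly cleaner, since for an interior closed curve $\gamma$ the double $\gamma\cup\bar\gamma$ is a two-component multicurve rather than an element of $\mathcal{C}(S^d)$, and you also make explicit the essentiality check that the paper leaves implicit.
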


\begin{proof}
For any $\gamma\in\mathcal{C}(S)\cup \mathcal{B}(S)$, we have, from Lemma \ref{le:d}, 
\[\frac{l_{X}(\gamma)}{l_{Y}(\gamma)}=
\frac{l_{X^d}(\gamma^{d})}{l_{Y^d}(\gamma^{d})}
\leq\sup_{\gamma\in\mathcal{C}(S^d)}\frac{l_{X^d}(\gamma)}{l_{Y^d}(\gamma)}.\]
Taking the supremum in the left hand side, we obtain the desired inequality.
\end{proof}

We now proceed to show the inverse inequality.

We recall that the set of weighted simple closed curves on $S^d$ is dense in the space $\mathcal{ML}(S^d)$, 
and that the hyperbolic length function, defined on weighted simple closed geodesics, extends to a continuous function defined on the space $\mathcal{ML}(S^d)$. 
With this in mind, and by compactness of the space $\mathcal{PML}(S^d)$, there is a measured geodesic lamination $\lambda$ on $S^d$ which realizes the supremum
\[\log\sup_{\gamma\in \mathcal{C}(S^d)}
        \frac {l_{X^d}(\gamma)}{l_{Y^d}(\gamma)}.\]
We shall use the following result of
Thurston, for surfaces without boundary, in which $i(\lambda_1, \lambda_2)$ denotes the geometric intersection number between the measured laminations $\lambda_1$ and $\lambda_2$ (see \cite{Thurston-notes} and \cite{Thurston1998}).  

\begin{theorem}[\cite{Thurston1998}, Theorem 8.2]
\label{th:Thurston1}
Let $S$ be a surface of finite type without boundary and let $X$ and $Y$ be two hyperbolic structures on $S$. 
If two measured geodesic laminations $\lambda_1$ and $\lambda_2$ attain the supremum
\[
\sup_{\gamma\in \mathcal{MF}(S)}
        \frac {l_{X}(\gamma)}{l_{Y}(\gamma)},\] then $i(\lambda_1, \lambda_2)=0$.
\end{theorem}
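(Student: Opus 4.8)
The plan is to show that \emph{every} measured lamination attaining the supremum is supported on one and the same geodesic lamination; since the leaves of a geodesic lamination are pairwise disjoint, two such maximizers cannot cross, and this gives $i(\lambda_1,\lambda_2)=0$. Write $K=\sup_{\gamma}l_X(\gamma)/l_Y(\gamma)$. I would first pass from the measure-theoretic picture to a geometric one by invoking the companion result of Thurston identifying $K$ with the minimal Lipschitz constant $L(Y,X)=\inf\{\operatorname{Lip}(\phi)\}$ over homeomorphisms $\phi\colon Y\to X$ homotopic to the identity. One inequality is elementary: any such $\phi$ sends the $Y$-geodesic representative of a class $\gamma$ to a loop of length at most $\operatorname{Lip}(\phi)\,l_Y(\gamma)$ in the same class, whence $l_X(\gamma)\le\operatorname{Lip}(\phi)\,l_Y(\gamma)$ and $K\le L(Y,X)$; the reverse inequality $L(Y,X)\le K$ is the substantive part of Thurston's construction.

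Next I would fix an optimal map $\phi\colon Y\to X$ with $\operatorname{Lip}(\phi)=K$ and analyze its behaviour on a maximizer. For a measured geodesic lamination $\lambda$ one has $l_X(\lambda)\le\operatorname{Lip}(\phi)\,l_Y(\lambda)=K\,l_Y(\lambda)$, by the Lipschitz estimate applied leafwise together with the continuous extension of the length function to $\mathcal{ML}$. If $\lambda$ attains the supremum, then $l_X(\lambda)=K\,l_Y(\lambda)$, so this inequality is an equality; this forces $\phi$ to realize its Lipschitz constant along almost every leaf of $\lambda$, that is, to stretch $\lambda$ by the factor $K$ at almost every point. Consequently the support of $\lambda$ lies in the \emph{maximal stretch locus} of $\phi$.

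I would then use the structural fact, also due to Thurston, that the optimal map can be chosen so that its maximal stretch locus carries a geodesic lamination $\Lambda=\Lambda(Y,X)$ containing the support of every maximizer. Granting this, both $\lambda_1$ and $\lambda_2$ are supported in $\Lambda$; their leaves are then leaves of the single geodesic lamination $\Lambda$, hence pairwise disjoint, and so $i(\lambda_1,\lambda_2)=0$.

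The main obstacle is exactly this last structural input: proving that the set where an optimal Lipschitz map stretches maximally can be organized into an honest geodesic lamination, and that it absorbs the support of every maximizer. This rests on the regularity theory of minimal stretch maps. The geometric mechanism behind it — and an alternative, more hands-on route to the theorem — is a corner-rounding estimate: if leaves of $\lambda_1$ and $\lambda_2$ crossed transversally, one could smooth the crossings to produce a nearby measured lamination, and because the crossing angles differ in the two metrics $X$ and $Y$, the first-order changes in $l_X$ and $l_Y$ would not cancel, allowing one to arrange the ratio to exceed $K$, contradicting maximality. Making this precise — controlling the measured lamination obtained by smoothing and the angle-dependent length variation in each metric — is where the real work lies.
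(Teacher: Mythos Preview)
The paper does not prove this theorem; it is quoted verbatim from Thurston's preprint \cite{Thurston1998} and used as a black box in the proof of Lemma~\ref{le:l}. There is therefore no ``paper's own proof'' to compare against.

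That said, your outline is a fair summary of the strategy in Thurston's original argument: one identifies the length-ratio supremum with the optimal Lipschitz constant, takes an optimal map, and shows that every maximizing lamination is carried by the maximal-stretch lamination $\Lambda(Y,X)$. You are right that the substantive content is entirely in the last structural step, and you flag this honestly. As written, though, your proposal is more an annotated statement of what needs to be true than a proof: you invoke both the equality $K=L(Y,X)$ and the existence and absorption property of $\Lambda(Y,X)$ as inputs, and these together are already stronger than the statement you are asked to prove. The corner-rounding alternative you sketch at the end is closer to a self-contained argument and is in fact how Thurston handles related concavity statements, but the heuristic ``the crossing angles differ in the two metrics, so the first-order changes do not cancel'' is not quite right---one has to be more careful, because at a maximum the first variation of the ratio vanishes, and the contradiction comes from a sign analysis of the length change under surgery, not from a generic non-cancellation.
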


Consider again two hyperbolic structures $X$ and $Y$ on our surface with boundary $S$, 
and the associated doubled hyperbolic structures, $X^d$ and $Y^d$  on the double $S^d$ of $S$.

\begin{lemma} 
\label{le:l}
There is a symmetric measured
geodesic lamination on $S^d$ which realizes the supremum
$$\log\sup_{\gamma\in \mathcal{ML}(S^d)}
        \frac {l_{X^d}(\gamma)}{l_{Y^d}(\gamma)}.$$
\end{lemma}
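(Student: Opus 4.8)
The plan is to exploit the canonical involution $\sigma$ of $S^d$, which by construction is an isometry of both doubled structures $X^d$ and $Y^d$. Being a homeomorphism of $S^d$, $\sigma$ acts on $\mathcal{ML}(S^d)$ by pushforward $\mu\mapsto\sigma_*\mu$; and being an isometry of $X^d$ and of $Y^d$, it preserves both length functions, i.e. $l_{X^d}(\sigma_*\mu)=l_{X^d}(\mu)$ and $l_{Y^d}(\sigma_*\mu)=l_{Y^d}(\mu)$ for every $\mu\in\mathcal{ML}(S^d)$. A measured lamination is symmetric precisely when $\sigma_*\mu=\mu$.

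I would first invoke the maximizer $\lambda\in\mathcal{ML}(S^d)$ of the ratio $l_{X^d}(\gamma)/l_{Y^d}(\gamma)$ produced above from compactness of $\mathcal{PML}(S^d)$ and continuity of the length functions; write $K$ for the value of this supremum. Since $\sigma$ scales neither numerator nor denominator, the ratio at $\sigma_*\lambda$ equals the ratio at $\lambda$, so $\sigma_*\lambda$ is again a maximizer. Thus $\lambda$ and $\sigma_*\lambda$ both attain the supremum, and Theorem \ref{th:Thurston1}, applied on the closed surface $S^d$, forces $i(\lambda,\sigma_*\lambda)=0$.

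The key construction is then to set $\mu=\lambda+\sigma_*\lambda$, and the main obstacle is to justify that this sum is again a measured geodesic lamination: this is exactly where the vanishing intersection number is needed, since two measured geodesic laminations with zero geometric intersection number have supports that do not cross transversally, so their union carries a well-defined transverse measure and is a genuine element of $\mathcal{ML}(S^d)$. Granting this, $\mu$ is symmetric because $\sigma_*\mu=\sigma_*\lambda+\sigma_*^2\lambda=\sigma_*\lambda+\lambda=\mu$, using $\sigma^2=\mathrm{id}$. Finally I would verify that $\mu$ still realizes the supremum by additivity of length on non-crossing laminations: $l_{X^d}(\mu)=2\,l_{X^d}(\lambda)$ and $l_{Y^d}(\mu)=2\,l_{Y^d}(\lambda)$, whence $l_{X^d}(\mu)/l_{Y^d}(\mu)=l_{X^d}(\lambda)/l_{Y^d}(\lambda)=K$. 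This exhibits the desired symmetric maximizer.
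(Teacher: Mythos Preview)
Your argument is correct and follows essentially the same route as the paper: take a maximizer $\lambda$, observe that its image under the canonical involution is again a maximizer since the involution is an isometry for both $X^d$ and $Y^d$, apply Thurston's Theorem~\ref{th:Thurston1} to get $i(\lambda,\sigma_*\lambda)=0$, and form the symmetric lamination $\lambda+\sigma_*\lambda$. The paper phrases the last step as taking the union $\lambda\cup\lambda'$ rather than the sum, but your formulation via addition of transverse measures is in fact slightly more precise (and your explicit length computation $l_{X^d}(\mu)=2\,l_{X^d}(\lambda)$ via additivity makes the final verification cleaner).
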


\begin{proof}
Let $\lambda\in\mathcal{ML}(S^d)$ be a measured geodesic
lamination realizing the supremum.
Let $\lambda'$ be the image of $\lambda$ by the canonical involution on $S^d$. Since
this involution is an isometry for $X^d$ and $Y^d$, $\lambda'$ is a measured geodesic lamination on $S^d$ which realizes the
supremum as well. By Thurston's theorem (Theorem \ref{th:Thurston1}), it follows that
$i(\lambda,\lambda')=0$. The union $\lambda\cup\lambda'$ is
therefore a measured geodesic lamination. It realizes the supremum
and it is invariant under the canonical
involution, that is, it is symmetric.
\end{proof}

\begin{remark}
Every component of a symmetric measured geodesic lamination which meets the fixed point locus
of the involution is, if it exists, a simple closed geodesic. Indeed,
such a component must intersect the fixed point locus perpendicularly, and
no component which is not a simple closed geodesic can intersect the fixed locus in this way, because of
the recurrence of leaves.
\end{remark}

\begin{proposition}
\label{weak1}
With the above notations, we have
\[
 \log\sup_{\gamma\in\mathcal{C}(S)\cup \mathcal{B}(S)}
        \frac {l_{X}(\gamma)}{l_{Y}(\gamma)}
=\log\sup_{\gamma\in \mathcal{C}(S^d)}
        \frac {l_{X^d}(\gamma)}{l_{Y^d}(\gamma)}.
\]
\end{proposition}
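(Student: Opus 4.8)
The inequality $\leq$ is exactly Corollary~\ref{cor:ineq} after taking logarithms, so the whole task is to establish the reverse inequality
\[
\sup_{\gamma\in \mathcal{C}(S^d)} \frac{l_{X^d}(\gamma)}{l_{Y^d}(\gamma)} \leq \sup_{\gamma\in\mathcal{C}(S)\cup\mathcal{B}(S)} \frac{l_X(\gamma)}{l_Y(\gamma)}.
\]
The plan is to exploit the symmetric maximizing lamination produced in Lemma~\ref{le:l}. First I would note that, since the length function is homogeneous of degree one in the transverse measure, the ratio $l_{X^d}/l_{Y^d}$ descends to a continuous function on $\mathcal{PML}(S^d)$; together with the density of weighted simple closed curves in $\mathcal{ML}(S^d)$ this gives that $\sup_{\mathcal{C}(S^d)} l_{X^d}/l_{Y^d}$ equals $\sup_{\mathcal{ML}(S^d)} l_{X^d}/l_{Y^d}$, and by Lemma~\ref{le:l} this supremum is attained by a symmetric measured geodesic lamination $\lambda$.

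Next I would decompose $\lambda$ according to the canonical involution $\sigma$. Grouping the components of $\lambda$ into $\sigma$-orbits, each orbit is of one of two types. Using the preceding Remark, a $\sigma$-invariant component must meet the fixed locus $\partial S$ and is therefore a simple closed geodesic; it is either a boundary component of $S$, or it crosses $\partial S$ orthogonally and hence restricts to an essential arc $\gamma$ on $S$ whose double is that very component $\gamma^d$. The remaining components do not meet $\partial S$, so each lies in the interior of one of the two halves and they are interchanged in pairs $\{\mu,\sigma(\mu)\}$ by $\sigma$, with $\mu$ an interior measured lamination on $S$. In every case, because $\sigma$ is an isometry of $X^d$ and of $Y^d$ and restricts to $X$, resp. $Y$, on $S$, the $X^d$-length of a symmetric piece is a fixed multiple (one or two, via Lemma~\ref{le:d}) of the corresponding $X$-length on $S$, and the same multiple occurs for $Y^d$ and $Y$; that multiple therefore cancels in the ratio.

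Since lengths add over disjoint components, writing $\lambda=\bigcup_k P_k$ as a union of these symmetric pieces and applying the elementary mediant inequality $\frac{\sum_k a_k}{\sum_k b_k}\leq \max_k \frac{a_k}{b_k}$ (with $a_k=l_{X^d}(P_k)$ and $b_k=l_{Y^d}(P_k)$) shows that the maximal ratio is already achieved by a single symmetric piece $P_{k_0}$, whose ratio equals $l_X(\omega)/l_Y(\omega)$ for the associated object $\omega$ on $S$. If $\omega$ is an arc or a boundary curve it belongs to $\mathcal{B}(S)$ and we are done. If $\omega$ is an interior lamination, I would conclude by approximating it by weighted essential simple closed curves on $S$ (dense in $\mathcal{ML}(S)$) and using the continuity of the ratio $l_X/l_Y$ to bound $l_X(\omega)/l_Y(\omega)$ by $\sup_{\gamma\in\mathcal{C}(S)} l_X(\gamma)/l_Y(\gamma)$. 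The main point to be careful about is precisely this last reduction from an interior minimal lamination to honest simple closed curves, together with the verification that the length of the symmetric lamination splits as claimed over the quotient surface $S$.
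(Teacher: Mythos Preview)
Your argument is correct and follows essentially the same route as the paper: invoke Lemma~\ref{le:l} to obtain a symmetric maximizing lamination, restrict to $S$, and observe that the ratio on a symmetric piece coincides with the corresponding ratio on $S$. The paper phrases the key step more tersely (``since $\alpha\subset\lambda$, the ratio of lengths $l_X(\alpha)/l_Y(\alpha)$ is equal to the supremum''), whereas you unpack this via the mediant inequality and are also explicit about approximating an interior minimal sublamination by simple closed curves; both points are implicit in the paper's proof but not spelled out there.
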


\begin{proof}
By Lemma \ref{le:l}, there exists a symmetric measured lamination
$\lambda$ on $S^d$ such that
\[ \log\sup_{\gamma\in \mathcal{C}(S^d)}
        \frac {l_{X^d}(\gamma)}{l_{Y^d}(\gamma)}= \log \frac{l_{X^d} (\lambda)}{l_{Y^d} (\lambda)}.\]
Let $\alpha$ be any component of $\lambda\cap S$ (for the natural inclusion $S\subset S^d$). Then
$\alpha$ is either an arc joining two (possibly equal) boundary components of $S$, or a measured geodesic sub-lamination of $\lambda$
contained in the interior of $S$. By symmetry (or by Lemma
\ref{le:d}) and since $\alpha\subset\lambda$, the ratio of lengths
$\displaystyle \frac {l_{X}(\alpha)}{l_{Y}(\alpha)}$ is equal to the supremum
over $\gamma\in \mathcal{C}(S^{d})$ of the ratios of lengths $\displaystyle \frac {l_{X^d}(\gamma)}{l_{Y^d}(\gamma)}$. 
 This shows
that 
\[\sup_{\gamma\in\mathcal{C}(S)\cup \mathcal{B}(S)}\frac
{l_{X}(\gamma)}{l_{Y}(\gamma)} \geq\sup_{\gamma\in
\mathcal{C}(S^d)}\frac {l_{X^d}(\gamma)}{l_{Y^d}(\gamma)}.\]
The reverse inequality is given in Corollary \ref{cor:ineq}.
\end{proof}

\begin{corollary}
\label{co:double}
Let $S$ be a surface of topologically finite type.
For $X, Y$ in $\mathcal {T}(S)$,
$$
d(X,Y)=d(X^d,Y^d),\quad \overline{d}(X,Y)=\overline{d}(X^d,Y^d).
$$
\end{corollary}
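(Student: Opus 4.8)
The plan is to derive this corollary almost immediately from Proposition \ref{weak1}, since that proposition already carries essentially all of the content. The first observation I would make is that the double $S^d$ is a closed surface (with punctures but no boundary), so it carries no essential arcs and no simple closed curves parallel to a boundary component; hence $\mathcal{B}(S^d)=\emptyset$. Consequently, when the defining formulae \eqref{eq:def1} and \eqref{eq:def2} are applied to $S^d$, the supremum over $\mathcal{B}(S^d)\cup\mathcal{C}(S^d)$ collapses to a supremum over $\mathcal{C}(S^d)$ alone. In particular $\overline{d}(X^d,Y^d)=\log\sup_{\gamma\in\mathcal{C}(S^d)} l_{X^d}(\gamma)/l_{Y^d}(\gamma)$ and $d(X^d,Y^d)=\log\sup_{\gamma\in\mathcal{C}(S^d)} l_{Y^d}(\gamma)/l_{X^d}(\gamma)$.

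With this identification in hand, the equality for $\overline{d}$ is nothing but a restatement of Proposition \ref{weak1}: the left-hand side $\log\sup_{\gamma\in\mathcal{C}(S)\cup\mathcal{B}(S)} l_X(\gamma)/l_Y(\gamma)$ is by definition $\overline{d}(X,Y)$, while the right-hand side $\log\sup_{\gamma\in\mathcal{C}(S^d)} l_{X^d}(\gamma)/l_{Y^d}(\gamma)$ equals $\overline{d}(X^d,Y^d)$ by the previous paragraph. This yields $\overline{d}(X,Y)=\overline{d}(X^d,Y^d)$ with no further work.

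To obtain the companion identity for $d$, I would invoke the elementary relation $d(X,Y)=\overline{d}(Y,X)$ recorded just after the definitions, together with the fact that doubling commutes with interchanging the two points. Applying the $\overline{d}$-identity just proved, but with the roles of $X$ and $Y$ exchanged, gives $\overline{d}(Y,X)=\overline{d}(Y^d,X^d)$; rewriting both sides through $\overline{d}(Y,X)=d(X,Y)$ and $\overline{d}(Y^d,X^d)=d(X^d,Y^d)$ then produces $d(X,Y)=d(X^d,Y^d)$. There is essentially no obstacle at this stage: all the analytic difficulty---the existence of a symmetric maximizing lamination and the appeal to Thurston's Theorem \ref{th:Thurston1}---has already been absorbed into Proposition \ref{weak1}. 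The only point requiring a moment's care is the bookkeeping observation $\mathcal{B}(S^d)=\emptyset$, which is precisely what allows the two superficially different-looking suprema to be identified.
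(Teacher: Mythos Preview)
Your proof is correct and matches the paper's approach: the paper states this corollary immediately after Proposition~\ref{weak1} with no explicit argument, treating it as an immediate consequence, and your write-up supplies precisely the bookkeeping (that $\mathcal{B}(S^d)=\emptyset$ and that $d(X,Y)=\overline{d}(Y,X)$) needed to make the deduction rigorous.
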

  
\begin{proposition}
Let $S$ be a surface of topologically finite type.
Then $d$ and $\overline{d}$ are asymmetric weak metrics on $\mathcal{T}(S)$.
\end{proposition}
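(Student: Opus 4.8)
The plan is to establish the three axioms for $d$; since $\overline{d}(X,Y)=d(Y,X)$, as observed just after \eqref{eq:def2}, every assertion about $d$ transfers verbatim to $\overline{d}$, so it suffices to treat $d$. The guiding idea is that Corollary \ref{co:double} identifies $d$ on $\mathcal{T}(S)$ with Thurston's asymmetric metric on the closed double $S^{d}$, which lets me import the closed-surface theory of \cite{Thurston1998} for the two metric axioms, while the asymmetry must be checked separately.

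For finiteness, nonnegativity and the separation axiom (a) I would argue through the double. By Corollary \ref{co:double} we have $d(X,Y)=d(X^{d},Y^{d})$, and the right-hand side is the value of Thurston's asymmetric metric on $\mathcal{T}(S^{d})$ (note that $\mathcal{B}(S^{d})=\emptyset$, so the defining supremum \eqref{eq:def1} on $S^d$ runs over $\mathcal{C}(S^{d})$ alone); by \cite{Thurston1998} this is a finite, nonnegative function that vanishes exactly on the diagonal. Hence $d(X,Y)\in[0,\infty)$, and $d(X,Y)=0$ forces $X^{d}=Y^{d}$. Since the doubling map $X\mapsto X^{d}$ is injective — one recovers $X$ by restricting the symmetric structure $X^{d}$ to $S\subset S^{d}$ — this yields $X=Y$, while the reverse implication is trivial. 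The triangle inequality (b) I would instead prove directly from \eqref{eq:def1}: for every $\alpha\in\mathcal{B}\cup\mathcal{C}$,
\[
\frac{l_Z(\alpha)}{l_X(\alpha)}=\frac{l_Z(\alpha)}{l_Y(\alpha)}\,\frac{l_Y(\alpha)}{l_X(\alpha)}\leq\Bigl(\sup_{\beta}\frac{l_Z(\beta)}{l_Y(\beta)}\Bigr)\Bigl(\sup_{\beta}\frac{l_Y(\beta)}{l_X(\beta)}\Bigr),
\]
and taking the supremum over $\alpha$ and then logarithms gives $d(X,Z)\leq d(X,Y)+d(Y,Z)$.

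The main obstacle is the asymmetry (c): I must exhibit $X,Y\in\mathcal{T}(S)$ with $d(X,Y)\neq d(Y,X)$. This does not follow formally from the asymmetry of Thurston's metric on $S^{d}$, because $d$ only records that metric on the symmetric locus of $\mathcal{T}(S^{d})$, and a globally asymmetric function could restrict symmetrically to a subspace. I would therefore construct an explicit example on a simple piece — a pair of pants, or a subsurface carrying a single essential arc — and compare the two suprema by direct computation. Concretely, one realizes the competing geodesics (boundary curves and their common orthogeodesic arcs) as edges of right-angled hexagons and pentagons and evaluates their lengths via \eqref{eq:hexagon1} and \eqref{eq:pentagon1}; choosing $Y$ to be a deformation of $X$ that shrinks a boundary component makes the arcs abutting it grow while the closed geodesics respond differently, so that the extremal ratio $l_Y(\alpha)/l_X(\alpha)$ is governed by a different family of curves than $l_X(\alpha)/l_Y(\alpha)$. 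I expect this length computation, not the verification of the weak-metric axioms, to be the delicate step, since it requires quantifying the asymmetric response of arcs versus simple closed curves to the chosen deformation.
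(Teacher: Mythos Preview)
Your treatment of the weak-metric axioms (a) and (b) is essentially the paper's: the triangle inequality is immediate from the definition, and separation is inherited from Thurston's result on the closed double via Corollary \ref{co:double}.

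For the asymmetry (c), you correctly diagnose the obstacle --- that asymmetry of Thurston's metric on $\mathcal{T}(S^{d})$ does not formally restrict to the symmetric locus --- but you then propose a harder route than necessary, namely building an explicit pair of pants example from scratch via the hexagon and pentagon formulae, and you leave this computation unfinished. The paper avoids this work entirely: it simply observes that Thurston's own asymmetry example (\cite{Thurston1998}, \S 2, p.~5) on a surface without boundary can be chosen to be invariant under the canonical involution of $S^{d}$; once the example lives in the symmetric locus, Corollary \ref{co:double} pulls the inequality $d(X^{d},Y^{d})\neq d(Y^{d},X^{d})$ back to $d(X,Y)\neq d(Y,X)$ on $\mathcal{T}(S)$. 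So the ``delicate step'' you anticipate is replaced by a one-line remark. Your approach would work in principle, but as written it is only a plan, and the asymmetric response of arcs versus closed curves that you invoke would still need to be made quantitative before the argument is complete.
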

 
\begin{proof}
The triangle inequality is easily satisfied, and the separation property follows from the separation property of the metrics $d$ and $\overline{d}$ for surfaces without boundary
(which is Theorem 3.1 of Thurston \cite{Thurston1998}).
This proves that $d$ and $\overline{d}$ are weak metrics.
Thurston's example (\cite{Thurston1998}, \S 2, p.5) that shows the asymmetry of the weak metrics $d$ and $\overline{d}$ on $S^d$ can clearly be made symmetric with respect to the canonical involution on $S^d$ 
and therefore shows the asymmetry of the weak metrics $d$ and $\overline{d}$.
\end{proof}

We reformulate Corollary \ref{co:double} as follows:
 
\begin{corollary}
\label{co:double2}
The natural inclusion $\mathcal{T}(S)\to\mathcal{T}(S^d)$ given by doubling the hyperbolic structures 
is an isometry for the weak metric $d$ on $\mathcal{T}(S)$ and $d$ on $\mathcal{T}(S^d)$ 
(respectively for $\overline{d}$ on $\mathcal{T}(S)$ and $\overline{d}$ on $\mathcal{T}(S^d)$).
\end{corollary}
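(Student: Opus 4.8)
The plan is to observe that this corollary is essentially a restatement of Corollary \ref{co:double} in the terminology of isometric maps, so that the proof consists in unwinding the definition of an isometry for weak metric spaces and matching it against the two displayed equalities already established.

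First I would recall what it means for the doubling map to be an isometry. A map $\iota$ from a weak metric space $(M_1,\delta_1)$ to a weak metric space $(M_2,\delta_2)$ is an isometric embedding precisely when $\delta_2(\iota(X),\iota(Y))=\delta_1(X,Y)$ for all $X,Y\in M_1$. Here the natural inclusion $\iota:\mathcal{T}(S)\to\mathcal{T}(S^d)$ is the doubling map $X\mapsto X^d$; that it is well defined, i.e.\ that the doubled structure $X^d$ is a genuine hyperbolic structure on the closed punctured surface $S^d$ depending only on the class of $X$ in $\mathcal{T}(S)$, was recorded in the preliminaries. With this identification, the two equalities $d(X,Y)=d(X^d,Y^d)$ and $\overline{d}(X,Y)=\overline{d}(X^d,Y^d)$ of Corollary \ref{co:double} say exactly that $\iota$ preserves $d$ and preserves $\overline{d}$, which is the assertion to be proved.

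I would then note that Corollary \ref{co:double} is itself immediate from Proposition \ref{weak1}: since $S^d$ has empty boundary, the weak metric $\overline{d}$ on $\mathcal{T}(S^d)$ is the logarithm of the supremum of the length ratios $l_{X^d}/l_{Y^d}$ taken over $\mathcal{C}(S^d)$ alone, and Proposition \ref{weak1} identifies this supremum with the one taken over $\mathcal{C}(S)\cup\mathcal{B}(S)$, which by definition equals $\overline{d}(X,Y)$. The corresponding equality for $d$ follows using the relation $d(X,Y)=\overline{d}(Y,X)$ noted after \eqref{eq:def2}. Finally, since $d$ is a weak metric on $\mathcal{T}(S)$ it satisfies the separation axiom, so a distance-preserving map is automatically injective, and $\iota$ is therefore an isometry onto its image.

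There is essentially no obstacle at this stage: all of the genuine work has already been carried out in Lemma \ref{le:l}, which invokes Thurston's Theorem \ref{th:Thurston1} to produce a symmetric maximizing lamination, and in Proposition \ref{weak1}. The only point deserving a moment's care is the bookkeeping about which curve classes enter the definition of the weak metric on each of the two surfaces, namely that essential arcs and boundary-parallel curves on $S$ correspond, under doubling, to symmetric simple closed geodesics on $S^d$ via Lemma \ref{le:d}; but this correspondence is precisely what Proposition \ref{weak1} encodes.
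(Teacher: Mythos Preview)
Your proposal is correct and matches the paper's approach exactly: the paper introduces this corollary with the sentence ``We reformulate Corollary \ref{co:double} as follows'' and gives no further argument, so the content is precisely the unwinding of the definition of isometry that you describe. Your additional remarks tracing Corollary \ref{co:double} back to Proposition \ref{weak1} and noting injectivity are accurate but go beyond what the paper records.
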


\begin{remark}
There is a natural projection $\mathcal{T}(S^{d})\to\mathcal{T}(S)$ which 
is the left inverse of the above natural inclusion.
\end{remark}

\subsection{Another expression for the weak metrics $d$ and $\overline{d}$}

We shall show that the weak metrics $d$ and $\overline{d}$ considered before can be expressed 
using suprema over the set $\mathcal{B}$ only.

\begin{proposition}
\label{another_expr}
For all $X,Y\in\mathcal{T}(S)$, one has 
$$
d(X,Y)=\log\sup_{\gamma\in\mathcal{B}}\frac{l_{Y}(\gamma)}{l_{X}(\gamma)},
$$
$$
\overline{d}(X,Y)=\log\sup_{\gamma\in\mathcal{B}}\frac{l_{X}(\gamma)}{l_{Y}(\gamma)}.
$$
\end{proposition}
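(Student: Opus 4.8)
The plan is to reduce both identities to a single estimate. Since $\mathcal{B}\subseteq\mathcal{B}\cup\mathcal{C}$, the inequality $\sup_{\gamma\in\mathcal{B}}\frac{l_{Y}(\gamma)}{l_{X}(\gamma)}\le\sup_{\gamma\in\mathcal{B}\cup\mathcal{C}}\frac{l_{Y}(\gamma)}{l_{X}(\gamma)}$ is automatic, so by the definition of $d$ it suffices to prove the reverse inequality; the formula for $\overline{d}$ then follows by exchanging the roles of $X$ and $Y$. For the reverse inequality I would bound each term of the larger supremum, that is, show that
\[
\frac{l_{Y}(\gamma)}{l_{X}(\gamma)}\le\sup_{\beta\in\mathcal{B}}\frac{l_{Y}(\beta)}{l_{X}(\beta)}\qquad\text{for every }\gamma\in\mathcal{C}.
\]
If $\gamma$ is homotopic to a boundary component this is trivial, since then $\gamma\in\mathcal{B}\cap\mathcal{C}$. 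Hence the whole content lies in the case where $\gamma$ is an essential simple closed curve whose geodesic representative lies in the interior of $S$, and the idea is to manufacture a sequence of arcs whose length ratios converge to $l_{Y}(\gamma)/l_{X}(\gamma)$.

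First I would fix an essential arc $\beta_{0}\in\mathcal{B}$ crossing $\gamma$ essentially, i.e. with $i(\beta_{0},\gamma)>0$. Such a $\beta_{0}$ exists: if $\gamma$ is non-separating, or separates $S$ with each complementary piece meeting $\partial S$, one immediately finds a properly embedded arc meeting $\gamma$ transversally and essentially. Otherwise $\gamma$ bounds a subsurface $S_{2}$ disjoint from $\partial S$; since $\gamma$ is essential and non-peripheral, $S_{2}$ has negative Euler characteristic and a single boundary curve, so it contains an essential properly embedded arc $\delta$ with endpoints on $\gamma$, and joining the two endpoints of $\delta$ to $\partial S$ by arcs on the other side of $\gamma$ yields an arc $\beta_{0}$ with $i(\beta_{0},\gamma)=2$.

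Next I would let $T_{\gamma}$ be the Dehn twist about $\gamma$ and set $\beta_{n}=T_{\gamma}^{\,n}(\beta_{0})\in\mathcal{B}$. As $n\to\infty$ the normalized arcs $\frac{1}{n}\beta_{n}$ converge to $i(\beta_{0},\gamma)\,\gamma$, so by continuity of the hyperbolic length function one expects $\frac{1}{n}l_{Z}(\beta_{n})\to i(\beta_{0},\gamma)\,l_{Z}(\gamma)$ for every hyperbolic structure $Z$; the cleanest way to make this rigorous is to pass to the double, where by Lemma \ref{le:d} one has $l_{Z}(\beta_{n})=\frac{1}{2}l_{Z^{d}}(\beta_{n}^{d})$, and the asymptotics then follow from continuity of length on $\mathcal{ML}(S^{d})$ together with the projective convergence of iterated twists on the closed surface $S^{d}$. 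Taking $Z=X$ and $Z=Y$ and dividing, the factor $i(\beta_{0},\gamma)$ cancels and
\[
\frac{l_{Y}(\beta_{n})}{l_{X}(\beta_{n})}\longrightarrow\frac{l_{Y}(\gamma)}{l_{X}(\gamma)},
\]
so $l_{Y}(\gamma)/l_{X}(\gamma)$ is a limit of ratios attached to arcs in $\mathcal{B}$, which gives the displayed estimate and hence the proposition.

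I expect the only genuine difficulty to be this interior case, and within it two points: the topological existence of a crossing arc, handled by the short case analysis above, and the control of $l_{Z}(\beta_{n})$ under twisting. The latter is the heart of the matter; rather than argue it directly on $S$ (where one would have to discuss measured laminations with boundary leaves), I would transport it to $S^{d}$, where convergence of $\frac{1}{n}T_{\gamma}^{\,n}(\beta_{0})$ in the space of measured laminations and continuity of the length function are available in their standard form.
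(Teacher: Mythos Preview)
Your argument is correct and follows the same overall strategy as the paper---show that every ratio $l_Y(\gamma)/l_X(\gamma)$ with $\gamma\in\mathcal{C}$ is a limit of ratios attached to arcs---but the mechanism you use to produce the approximating arcs is different. The paper proves a preparatory lemma (Lemma~\ref{lemma:sym}) that builds, via a symmetric train track on $S^d$, a sequence of symmetric simple closed curves converging projectively to $\alpha^d$ and meeting $\partial S$ in exactly two points; restricting to $S$ then yields connected arcs in $\mathcal{B}$ whose length ratios converge to that of $\alpha$. You instead take a fixed arc $\beta_0$ crossing $\gamma$ and use the Dehn-twist iterates $\beta_n=T_\gamma^n\beta_0$. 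This is more elementary and avoids train tracks; the paper's construction, on the other hand, is stated for arbitrary measured laminations $\alpha$ (not just curves) and produces arcs of a particularly simple combinatorial type, which is a bit more than is needed here.

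One point worth tightening in your write-up: when you pass to the double to justify $\frac{1}{n}l_Z(\beta_n)\to i(\beta_0,\gamma)\,l_Z(\gamma)$, note that the homeomorphism of $S^d$ obtained by doubling $T_\gamma$ is not a single Dehn twist but the multi-twist $T_\gamma T_{\bar\gamma}^{-1}$ (the involution is orientation-reversing). The projective convergence you invoke is therefore the standard fact that $\frac{1}{n}\bigl(T_\gamma^{n}T_{\bar\gamma}^{-n}\bigr)(\beta_0^d)\to i(\beta_0^d,\gamma)\,\gamma+i(\beta_0^d,\bar\gamma)\,\bar\gamma = i(\beta_0,\gamma)\,\gamma^d$ in $\mathcal{ML}(S^d)$, which indeed gives what you want after applying continuity of length and Lemma~\ref{le:d}. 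Your case analysis for the existence of a crossing arc $\beta_0$ is fine.
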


We shall use the following technical lemma:

\begin{lemma}
\label{lemma:sym}
Let $\beta$ be a component of $\partial S$ and let $\alpha$
be a measured geodesic lamination on $S$, with $\alpha^d$ being its double (that is, $\alpha^d$ is the union of 
$\alpha$ and its image by the canonical involution). 
Then, there is a sequence of symmetric simple closed curves on $S^d$ converging to $\alpha^d$ in the topology of  
$\mathcal{PML}(S^d)$ such that each of these simple closed curves intersects essentially and in exactly two points the image of $\beta$ in $S^d$, 
and intersects no other component of the image of $\partial S$ in $S^d$ than $\beta$ itself.
\end{lemma}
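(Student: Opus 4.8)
The plan is to reduce the statement to the construction of suitable arcs in $S$, and then to a winding/train-track approximation argument carried out on $S$.

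\emph{Reduction.} First I would identify exactly which symmetric curves are being asked for. Let $\sigma$ denote the canonical involution of $S^d$, whose fixed locus is $\partial S$. If $c$ is a symmetric simple closed curve meeting $\partial S$ only along $\beta$, and there in exactly two points $p,q$, then $p$ and $q$ are fixed by $\sigma$, so they cut $c$ into two arcs that are interchanged by $\sigma$; hence $c = a\cup\bar a = a^d$ is the double of a single arc $a$ properly embedded in $S$ with $\partial a = \{p,q\}\subset\beta$ and interior contained in the interior of $S$. Conversely, the double of any such arc is a symmetric simple closed curve meeting $\partial S$ only along $\beta$, in the two points $\partial a$, and (topologically, just as in Lemma \ref{le:d}) this intersection is essential, i.e. $i(a^d,\beta)=2$, precisely when $a$ is essential, i.e. cannot be homotoped rel $\beta$ into $\beta$. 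Thus it suffices to produce a sequence of essential arcs $a_n$ in $S$, each with both endpoints on $\beta$ and interior in $\operatorname{int}(S)$, with $a_n^d\to\alpha^d$ in $\mathcal{PML}(S^d)$.

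\emph{Construction.} I would carry $\alpha$ by a train track $\tau$ in $S$ and fix a short spoke $e$ running from a point of $\beta$ to $\tau$ with interior in $\operatorname{int}(S)$. For each large integer $n$, choose integral weights on $\tau$ approximating $n$ times the weights of $\alpha$; these weights are realized by a multicurve carried by $\tau$. I then thread a \emph{single} embedded arc $a_n$ that leaves $\beta$ along $e$, runs along $\tau$ realizing these weights (running around the track roughly $n\,w$ times), and returns to $\beta$ along a parallel copy of $e$, all kept embedded and with interior in $\operatorname{int}(S)$ meeting $\beta$ only at its two endpoints. Equivalently, one may first approximate $\alpha$ by a single weighted interior simple closed curve $c$ (density of weighted simple closed curves in $\mathcal{ML}(S)$), let $a_n$ leave $\beta$ along $e$, wind $n$ times around $c$, and return along $e$, and then conclude by a diagonal argument.

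\emph{Convergence and verification.} For any test curve $\mu$, the spoke contributes a number of intersections bounded independently of $n$, while the winding part contributes approximately $n\,i(\alpha,\mu)$; hence $\tfrac1n a_n\to\alpha$ in $\mathcal{ML}(S)$. Doubling is continuous and commutes with scaling, and $(\tfrac1n a_n)^d=\tfrac1n a_n^d$, so $\tfrac1n a_n^d\to\alpha^d$ in $\mathcal{ML}(S^d)$, whence $[a_n^d]\to[\alpha^d]$ in $\mathcal{PML}(S^d)$. For $n$ large each $a_n$ is essential (it winds around an essential object) and embedded, with endpoints on $\beta$ and interior in $\operatorname{int}(S)$; by the reduction, $a_n^d$ is a symmetric simple closed curve meeting $\beta$ essentially in exactly two points and meeting no other component of $\partial S$, as required.

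\emph{Main obstacle.} The delicate point is the simultaneous control demanded in the construction: one must produce a \emph{connected} embedded arc winding a large integral number of times along a train track carrying $\alpha$ (a Dehn--Thurston/train-track approximation, not merely a carried multicurve) while pinning its intersection with $\partial S$ to exactly the two spoke endpoints and guaranteeing that the resulting two intersection points with $\beta$ are essential. Checking that $\tfrac1n a_n$ converges to $\alpha$ itself, with no stray boundary-parallel contributions and no extra intersections with $\partial S$ created by the winding, is the technical heart; the remaining steps are routine given the continuity of the doubling map and the density of weighted simple closed curves.
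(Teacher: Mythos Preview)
Your proposal is correct and takes essentially the same approach as the paper: the paper also reduces by density to the case where $\alpha$ is a single simple closed curve, and then builds a symmetric train track on $S^d$ consisting of $\alpha^d$ together with a central branch crossing $\beta$ once, which is precisely the double of your ``spoke plus winding'' construction on $S$. Your explicit reduction step (symmetric simple closed curves meeting $\partial S$ only in two points of $\beta$ are exactly doubles of essential arcs in $S$ with both endpoints on $\beta$) makes transparent what the paper's construction does implicitly.
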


\begin{proof}
By the density of the set of weighted simple closed geodesics in $\mathcal{ML}(S)$, 
we assume without loss of generality that $\alpha$ is a simple closed geodesic.
Thus, $\alpha^d$ is a symmetric measured geodesic lamination with two components, each of them being a simple closed curve ($\alpha^d$ is a multicurve).

We construct a train track $\tau$ on $S^d$ carrying $\alpha^d$ as follows.
Consider the symmetric multicurve $\alpha^d$.
Take two points on each component of $\alpha^d$ so as to get two pairs of symmetric points.
Connect these two pairs of points with a graph of the form ``$>$--$<$" so that the central branch, $e$, intersects the curve $\beta$
in one point in its interior and so that the result is a symmetric train track containing $\alpha^{d}$, as in Figure \ref{trucfig1}.  
Add two other branches, each connecting a component of $\alpha^d$ to itself, in a symmetric fashion,
so as to obtain a symmetric recurrent train track $\tau$, as shown in Figure \ref{trucfig1}.

Consider the following weights on the branches of $\tau$.
Put a weight 2 on the central branch $e$, and split this weight into unit weights on the four adjacent branches.
Put an integral weight $n\geq1$ on the two large branches of $\tau$ contained in $\alpha^{d}$, as shown in Figure \ref{trucfig1}.
These weights determine all the weights on the other branches of $\tau$.

It is easily seen that, for each $n$, the multicurve $\alpha^d_n$ obtained from these integral weights is connected and intersects essentially the image of $\beta$ in $S$ in exactly two points and that it intersects no other components of $\partial S$.
Furthermore, it is symmetric since $\alpha^{d}_{n}$ and its image by the involution determine the same weights
on the symmetric train track $\tau$.

Clearly, the sequence of weighted simple closed curves $(\frac{1}{n}\alpha^{d}_{n})$ converges to $\alpha^d$ in $\mathcal{ML}(S^{d})$ as desired.  

This concludes the proof.
\end{proof}

\begin{figure}[!hbp]
\centering
\psfrag{n}{$n$}
\psfrag{1}{$1$}
\psfrag{2}{$2$}
\psfrag{b}{$\beta$}
\includegraphics[width=.4\linewidth]{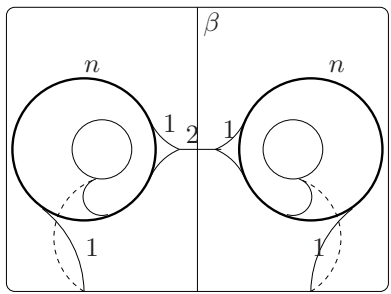}
\caption{\small{An example of a train track $\tau$ needed in the proof 
of Lemma
\ref{lemma:sym}. The symmetric multicurve $\alpha^{d}$ is represented in thick lines.
The weights for $\alpha_n^d$ on the various branches are indicated.}}
\label{trucfig1}
\end{figure}

\begin{proof}[Proof of Proposition \ref{another_expr}]
We only prove the proposition for the metric $d$ since the proof for the other metric will obviously be the same.
Let $\alpha$ be a simple closed curve on $S$.
Consider its double $\alpha^{d}$ in $S^d$. We take the sequence of symmetric weighted simple closed curves $(\gamma^d_{n})$ provided by Lemma \ref{lemma:sym} such that
$\lim_{n\to\infty}\gamma^d_{n}=\alpha^d$ in $\mathcal{ML}(S)$.  Then, for any two hyperbolic structures $X^d,Y^d$ on $S^d$ (which, for our needs, we can assume to be doubles of hyperbolic structures $X$ and $Y$ on $S$), we have a sequence of symmetric simple closed curves $(\gamma^d_{n})$ which, as elements of  $\mathcal{ML}(S)$, such that
$$
\Big{|}\frac{l_{Y^d}(\gamma^d_{n})}{l_{X^d}(\gamma^d_{n})}-
\frac{l_{Y^d}(\alpha^d)}{l_{X^d}(\alpha^d)}\Big{|}\to0.
$$
Note that we can assume that the curves  $(\gamma^d_{n})$ of  $\mathcal{ML}(S)$ are equipped with the counting measures, since their lengths in the above formula appear in quotients.

From the properties of the intersection with $\partial S$ of the curves $\gamma_n^d$ obtained from Lemma \ref{lemma:sym}, and taking now, for each $n$, the intersection $\gamma_n$ of this curve with $S$, we conclude that for any simple closed curve $\alpha$ in $S$ there exists a sequence of connected arcs $(\gamma_{n})$ in $\mathcal{B}$
such that
$$
\Big{|}\frac{l_{Y}(\gamma_{n})}{l_{X}(\gamma_{n})}-
\frac{l_{Y}(\alpha)}{l_{X}(\alpha)}\Big{|}\to0.
$$
  
This gives
$$
\sup_{\alpha\in\mathcal{B}\cup\mathcal{C}}\frac{l_{Y}(\alpha)}{l_{X}(\alpha)}
=
\sup_{\gamma\in\mathcal{B}}\frac{l_{Y}(\gamma)}{l_{X}(\gamma)},
$$
which concludes the proof.
\end{proof}

\section{Length spectrum metrics on $\mathcal{T}(S)$}
\label{section:length}

In this section,  $S$ is as before a surface of topologically finite type. We consider the following symmetrization of the weak asymmetric metrics that
we studied in the previous section.

$$\delta_L(X,Y) = \log\max \big\{\sup_{\gamma \in \mathcal{B}\cup\mathcal{C}}\frac{l_X(\gamma)}{l_Y(\gamma)},
\sup_{\gamma \in \mathcal {B}\cup \mathcal{C}}\frac{l_Y(\gamma)}{l_X(\gamma)}\big\}.$$ 
 
By Proposition \ref{another_expr}, we can also express this metric by
$$
 \delta_{L}(X,Y)=\log\max\big\{
 \sup_{\gamma\in\mathcal{B}}\frac{l_{Y}(\gamma)}{l_{X}(\gamma)},
 \sup_{\gamma\in\mathcal{B}}\frac{l_{X}(\gamma)}{l_{Y}(\gamma)}
 \big\}.
 $$
 
We also consider the following symmetric function on the product $\mathcal {T}(S)\times \mathcal {T}(S)$:

$$d_L(X,Y) = \log\max \big\{\sup_{\gamma \in \mathcal {C}}\frac{l_X(\gamma)}{l_Y(\gamma)},
\sup_{\gamma \in \mathcal
{C}}\frac{l_Y(\alpha)}{l_X(\alpha)}\big\}.$$

For surfaces of topologically finite type without bundary, $\delta_L$ and $d_L$ coincide, and they define the so-called {\it length spectrum metric} on Teichm\"uller space, which was originally defined by  Sorvali \cite{Sorvali},  and which has been studied by several authors, see e.g. \cite {Liu2001} and \cite{CR}.
From Corollary \ref{co:double}, we immediately deduce the following

\begin{corollary}\label{co:double2}
For $X, Y$ in $\mathcal{T}(S)$,
$$
\delta_L (X,Y)=\delta_L (X^d, Y^d).
$$
\end{corollary}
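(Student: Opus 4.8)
The plan is to deduce the corollary directly from Corollary \ref{co:double}, which asserts that the asymmetric weak metrics satisfy $d(X,Y)=d(X^d,Y^d)$ and $\overline{d}(X,Y)=\overline{d}(X^d,Y^d)$ for all $X,Y\in\mathcal{T}(S)$. Since $\delta_L$ is by definition nothing more than the symmetrization of $d$ and $\overline{d}$, the identity for $\delta_L$ should follow by taking a maximum of two quantities, each of which is already known to be invariant under doubling.

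\begin{proof}
By definition,
$$
\delta_L(X,Y)=\log\max\Big\{\sup_{\gamma\in\mathcal{B}\cup\mathcal{C}}\frac{l_X(\gamma)}{l_Y(\gamma)},\ \sup_{\gamma\in\mathcal{B}\cup\mathcal{C}}\frac{l_Y(\gamma)}{l_X(\gamma)}\Big\}=\max\{\,\overline{d}(X,Y),\,d(X,Y)\,\}.
$$
By Corollary \ref{co:double}, we have $d(X,Y)=d(X^d,Y^d)$ and $\overline{d}(X,Y)=\overline{d}(X^d,Y^d)$. Substituting these into the expression above gives
$$
\delta_L(X,Y)=\max\{\,\overline{d}(X^d,Y^d),\,d(X^d,Y^d)\,\}=\delta_L(X^d,Y^d),
$$
which is the desired identity.
\end{proof}

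First I would record the tautological observation that $\delta_L$ is the maximum of $d$ and $\overline{d}$, reading directly off the displayed definition of $\delta_L$ in terms of the two suprema. Next I would invoke Corollary \ref{co:double} to replace each of $d(X,Y)$ and $\overline{d}(X,Y)$ by its doubled counterpart. Since the $\max$ and $\log$ operations commute with this substitution termwise, the conclusion is immediate. I anticipate no genuine obstacle here: the entire content of the corollary is already packaged in Corollary \ref{co:double}, and the only thing to verify is that symmetrization by taking a maximum preserves the invariance, which it does because each argument of the maximum is individually invariant. The one point worth stating explicitly is that the maximum of two functions that are each unchanged under $(X,Y)\mapsto(X^d,Y^d)$ is itself unchanged, so no separate argument about the order in which the suprema and the maximum are taken is needed.
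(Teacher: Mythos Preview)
Your proof is correct and is exactly the argument the paper has in mind: the paper simply states that the corollary follows ``immediately'' from Corollary~\ref{co:double}, and your derivation---writing $\delta_L=\max\{d,\overline{d}\}$ and applying the doubling invariance of $d$ and $\overline{d}$ termwise---is precisely that immediate deduction made explicit.
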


We shall prove that that $d_L$ is a metric. 
This metric $d_L$ has been studied by several authors for surfaces without boundary, 
and in that context it is usually called the {\it length-spectrum metric} on $\mathcal {T}(S)$  
(see e.g. \cite{Liu1999} and  \cite{Liu2001}).
Before proving that $d_L$ is a metric, we shall explain why we did not consider the asymmetric version of the length-spectrum metric. 

We then shall compare $\delta_{L}$ with $d_L$ in the ``relative-thick part" (to be defined below) of Teichm\"uller space.

\subsection{Necessity of symmetrization}

We consider the following non-sym\-metric version of $d_L$:
 \[K(X,Y)= \sup_{\gamma \in \mathcal {C}(S)}\frac{l_X(\gamma)}{l_Y(\gamma)}.\] 

The function $K$ is not a weak metric on $\mathcal{T}(S)$ in general. 
It can take negative values, as can easily be seen by considering a pair of pants (a sphere with three boundary components), 
equipped with hyperbolic metrics $X$ and $Y$ such that the lengths of the three boundary components for the metric 
$X$ are all strictly smaller than the corresponding lengths for the metric $Y$. 
In this case $K(X,Y)<0$.

Actually, one can ask whether this example of the pair of pants can be generalized.
In other words, given a hyperbolic surface with boundary, one can ask whether there exists another hyperbolic metric
on that surface for which the lengths of all simple closed geodesics is strictly decreased by a uniformly bounded amount.
The following result gives a partial answer to that question.

\begin{theorem}[Parlier \cite{Parlier}]
Let $S$ be a surface of topologically finite type with non-empty boundary.
Let $X$ be a hyperbolic structure on $S$. 
Then there exists a hyperbolic structure $Y$ on $S$ such that $K(X,Y)\leq0$. 
\end{theorem}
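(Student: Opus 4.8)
The plan is to unwind the definition and then realize the desired $Y$ by a single, explicitly monotone deformation of $X$. Unwinding the definition (and recalling, as the pair-of-pants discussion shows, that $K$ is the logarithm of the supremum of the ratios $l_X(\gamma)/l_Y(\gamma)$), the inequality $K(X,Y)\le 0$ is equivalent to $\sup_{\gamma\in\mathcal C(S)}l_X(\gamma)/l_Y(\gamma)\le 1$, that is, to
\[
l_X(\gamma)\le l_Y(\gamma)\qquad\text{for every } \gamma\in\mathcal C(S).
\]
Thus it suffices to produce a hyperbolic structure $Y$ on $S$ that weakly lengthens every essential simple closed geodesic, the boundary curves included. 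I would look for such a $Y$ among the deformations of $X$ that enlarge the boundary while leaving the interior geometry untouched.

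Concretely, fix a pants decomposition of $S$ whose curves are interior essential simple closed geodesics, and record the corresponding reduced Fenchel--Nielsen data: the lengths $\ell_i$ and twists $\tau_i$ along the interior pants curves, together with the boundary lengths $\beta_j$. Define $Y$ to be the point of $\mathcal T(S)$ with the same interior data $(\ell_i,\tau_i)$ but with each boundary length $\beta_j$ replaced by an arbitrary $\beta_j'>\beta_j$; this is a well-defined element of the (reduced) Teichm\"uller space. The boundary curves are then strictly longer in $Y$ than in $X$, so the only thing left to check is that no interior simple closed geodesic gets shorter.

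The heart of the argument is the following monotonicity statement: lengthening the boundary cuffs while fixing all interior Fenchel--Nielsen coordinates does not decrease the length of any simple closed geodesic. A geodesic $\gamma$ disjoint from every boundary-adjacent pair of pants lies in a part of $S$ left isometric by the deformation, so $l_Y(\gamma)=l_X(\gamma)$. Otherwise $\gamma$ traverses one or more boundary-adjacent pants, always entering and leaving through interior cuffs (it never meets $\partial S$). Inside a single pair of pants with cuff lengths $a,b,c$, the right-angled hexagon formula \eqref{eq:hexagon1} shows that the common perpendicular between the cuffs of lengths $a$ and $b$ is an increasing function of the third cuff length $c$; more generally every geodesic arc entering through the $a$-cuff and leaving through the $b$-cuff, as well as every arc returning to a single interior cuff after wrapping around the lengthened boundary cuff, is lengthened when $c$ increases. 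Since the gluing twists are unchanged, patching these arc-by-arc comparisons along $\gamma$ yields $l_X(\gamma)\le l_Y(\gamma)$, which is exactly the inequality displayed above.

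The step I expect to be the real obstacle is making this ``patching'' rigorous, because the $Y$-geodesic in the class of $\gamma$ is an honestly different curve from the $X$-geodesic, so one must control the interaction of the lengthened pants with the fixed twists rather than compare arcs in isolation. The cleanest way to close this gap is to replace the arc bookkeeping by a single $1$-Lipschitz map $g\colon (S,Y)\to(S,X)$ homotopic to the marking, taken to be the identity on the undeformed interior pants and on all interior cuffs and a distance-nonincreasing map on each boundary-adjacent pair of pants; then for every $\gamma$ one gets $l_X(\gamma)\le \operatorname{length}_X\!\big(g(\gamma^Y)\big)\le \operatorname{length}_Y(\gamma^Y)=l_Y(\gamma)$. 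Constructing the pants-level non-expanding map is delicate---lengthening a cuff lengthens the opposite seam but shortens the two seams adjacent to it, so the map cannot simply fix all seams---but it can be produced explicitly as a collar-collapse near the enlarged boundary cuff, again governed by the trigonometry of \eqref{eq:hexagon1}. Finally, via the doubling isometry (Lemma \ref{le:d} and Corollary \ref{co:double}) the assertion is equivalent to weakly lengthening all \emph{symmetric} multicurves on the closed double $S^d$; the symmetry restriction is essential, since on a closed surface one cannot weakly lengthen the entire simple-closed-curve spectrum without leaving the metric unchanged, and this is precisely the boundary phenomenon that makes the theorem possible.
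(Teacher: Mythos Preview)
The paper does not prove this statement: it is quoted as a theorem of Parlier and is followed only by the one-sentence remark ``Indeed, Parlier shows that for any hyperbolic structure $X$\ldots\ we can find a hyperbolic structure $Y$\ldots\ such that for any element $\gamma$ in $\mathcal C(S)$, we have $l_X(\gamma)/l_Y(\gamma)<1$ (which only gives $K(X,Y)\le 0$).'' There is therefore no argument in the paper against which to compare your proposal.

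As for the proposal on its own terms: your reformulation is correct, and you are right that the paper's displayed definition of $K$ must be read with an implicit logarithm (otherwise $K$, being a supremum of positive ratios, could never be negative, contradicting the surrounding pair-of-pants discussion). Your strategy---freeze the interior Fenchel--Nielsen data and enlarge the boundary lengths---is natural and is indeed the mechanism behind Parlier's argument. The monotonicity you assert for the relevant orthogeodesic arcs in a boundary-adjacent pair of pants (the seam opposite the enlarged cuff, and the arc from an interior cuff to itself going around the enlarged cuff) is even true and can be extracted from \eqref{eq:hexagon1} and \eqref{eq:pentagon1}; you do not, however, supply these computations. More seriously, you correctly identify, and do not close, the genuine gap: the $Y$-geodesic in the class of $\gamma$ meets the interior cuffs at different points and angles than the $X$-geodesic, so arc-by-arc inequalities cannot simply be summed, and your proposed remedy---a $1$-Lipschitz map $(S,Y)\to(S,X)$ equal to the identity on the interior cuffs---is asserted (``delicate\ldots\ but it can be produced'') rather than constructed. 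The shrinking of the two seams adjacent to the enlarged boundary cuff is precisely the obstruction such a map must absorb, and nothing in your sketch explains how the ``collar-collapse'' does so. What you have written is a reasonable outline with an honestly flagged missing step, not a proof.
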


Indeed, Parlier shows that for any hyperbolic structure $X$ on any surface of topologically finite type with non-empty boundary, we can find a hyperbolic structure $Y$ on the same surface such that for any element $\gamma$ in $\mathcal{C}(S)$, we have $\displaystyle \frac{l_X(\gamma)}{l_Y(\gamma)}<1$ (which only gives $K(X,Y)\leq0$).

\subsection{Length spectrum metrics}

\begin{proposition} 
The functions  $d_L$ and $\delta_L$ are metrics on the Teichm\"uller space $\mathcal{T}(S)$.
\end{proposition}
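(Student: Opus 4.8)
The plan is to verify the three metric axioms for both $d_L$ and $\delta_L$: nondegeneracy (the separation property), symmetry, and the triangle inequality. Symmetry is immediate from the definitions, since each function is built as $\log\max$ of a quotient and its reciprocal, so interchanging $X$ and $Y$ leaves the value unchanged. The triangle inequality should also be routine: for any fixed $\gamma$, the quotient $l_X(\gamma)/l_Z(\gamma)$ factors as $\bigl(l_X(\gamma)/l_Y(\gamma)\bigr)\bigl(l_Y(\gamma)/l_Z(\gamma)\bigr)$, so taking logarithms and passing to suprema gives subadditivity; the $\max$ over the two directions is handled by applying this inequality to both the quotient and its reciprocal. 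Thus I would first dispatch symmetry and the triangle inequality quickly, noting in particular that $\delta_L$ is the symmetrization of the weak metric $d$ studied in Section~\ref{section:weak}, so that $\delta_L(X,Y)=\max\{d(X,Y),\overline{d}(X,Y)\}$, and a symmetrized weak metric automatically satisfies these two axioms.

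The real content is the separation property: $d_L(X,Y)=0$ (respectively $\delta_L(X,Y)=0$) should force $X=Y$ in $\mathcal{T}(S)$. For $\delta_L$, I would reduce to the surface without boundary by invoking Corollary~\ref{co:double2}: since $\delta_L(X,Y)=\delta_L(X^d,Y^d)$ and the length-spectrum metric on the closed surface $S^d$ is already known to separate points (this is essentially Theorem~3.1 of Thurston~\cite{Thurston1998}, cited earlier for the weak metrics), one concludes $X^d=Y^d$ in $\mathcal{T}(S^d)$, and then the natural projection $\mathcal{T}(S^d)\to\mathcal{T}(S)$ mentioned in the remark after Corollary~\ref{co:double2} gives $X=Y$. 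Alternatively, the separation for the weak metrics $d$ and $\overline{d}$ was already established in the proposition asserting they are asymmetric weak metrics, and $\delta_L$ dominates both, so $\delta_L(X,Y)=0$ forces both $d$ and $\overline{d}$ to vanish, hence $X=Y$.

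The main obstacle is the separation property for $d_L$, because $d_L$ is defined using only $\mathcal{C}$ (simple closed curves), not $\mathcal{B}\cup\mathcal{C}$, so the clean doubling argument for $\delta_L$ does not transfer verbatim: the length spectrum of simple closed curves alone need not determine a hyperbolic structure on a surface with boundary, since the boundary lengths and the way arcs meet the boundary carry extra information. I expect that $d_L$ separates points nonetheless, and the proof should rest on the classical fact (for surfaces with boundary, via the Fenchel--Nielsen coordinates associated to a pants decomposition) that the lengths of a suitable finite collection of simple closed curves in $\mathcal{C}(S)$ determine the hyperbolic structure, so that $d_L(X,Y)=0$, which forces $l_X(\gamma)=l_Y(\gamma)$ for every $\gamma\in\mathcal{C}(S)$, yields $X=Y$.

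Concretely, I would argue that if $d_L(X,Y)=0$ then $l_X(\gamma)=l_Y(\gamma)$ for all $\gamma\in\mathcal{C}(S)$, and then show this common simple-length spectrum pins down the point of $\mathcal{T}(S)$ by exhibiting enough curves to recover Fenchel--Nielsen length and twist parameters; one must take care that in the reduced theory the boundary components themselves lie in $\mathcal{C}(S)$, so their lengths are controlled, and that the twist parameters are recovered from lengths of curves crossing the pants curves, exactly as in the closed case treated by Sorvali~\cite{Sorvali} and in the references \cite{Liu1999},~\cite{Liu2001}. The delicate point to check is that $\mathcal{C}(S)$ contains sufficiently many curves to separate structures even though $S$ has boundary; once this is in place, nondegeneracy of $d_L$ follows, completing the proof that both functions are genuine metrics.
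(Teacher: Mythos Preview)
Your proposal is correct and follows essentially the same route as the paper: both treat symmetry and the triangle inequality as immediate, derive separation for $\delta_L$ from its being the symmetrization of the weak metric $d$, and derive separation for $d_L$ by observing that $d_L(X,Y)=0$ forces $l_X(\gamma)=l_Y(\gamma)$ for all $\gamma\in\mathcal{C}(S)$ and then invoking the classical fact that the simple-length spectrum determines the point of $\mathcal{T}(S)$ (the paper cites \cite{FLP}; your Fenchel--Nielsen argument is the same in spirit). One small point you omit but the paper addresses explicitly is the nonnegativity of $d_L$: since the asymmetric quantity $K(X,Y)$ can be $<1$, the paper checks that $K(X,Y)\leq 1$ and $K(Y,X)\leq 1$ together force both to equal $1$; this is easy (for any fixed $\gamma$ one of the two ratios is $\geq 1$), but worth stating.
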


\begin{proof}  
The fact that these functions satisfy the triangle inequality is immediate. 
The point to prove is that they are nonnegative and separate points. 
 
The function  $\delta_L$ is a symmetrization of the weak metric $d$ (or $\overline{d}$) which we considered in \S \ref{section:weak}, 
therefore it separates points, and it is a metric. 
This fact will also follow from the property $d_L(X,Y)\leq\delta_L(X,Y)$, 
and therefore it suffices to show that the function $d_L$ is nonnegative and separates points.

Assume that $d_L(X,Y)=0$.
This means that $K(X,Y)=K(Y,X)=1$. 
Equivalently, this means that the length of
any simple closed curve with respect to the hyperbolic structure $X$ is equal to the
length of that curve with respect to the hyperbolic structure $Y$. (Recall that boundary curves are included in $\mathcal {C}(S)$.)
By a well-known result (see e.g. \cite{FLP}), this implies that $X$ coincides with $Y$, as elements of Teichm\"uller space.

Now let us show that the $d_L$-distance between $X$ and $Y$ is nonnegative.

If $K(X,Y)>1$ or $K(Y,X)>1$ then the distance between $X$ and $Y$ is positive.

If $K(X,Y)\leq1$ and $K(Y,X)\leq 1$, then $K(X,Y)=K(Y,X)=1$.
This follows from the fact that if $K(X,Y)<1$, that is, if all simple closed
curves are strictly contracted from $X$ to $Y$, then $K(Y,X)>1$.
But we already noted that the equalities $K(X,Y)=K(Y,X)=1$ imply that $X=Y$.
\end{proof}

\subsection{Comparison between length spectrum metrics in the relative thick part of $\mathcal{T}(S)$}

In this section, $S$ is as before a surface of finite type with negative Euler-Poincar\'e characteristic and non-empty boundary.

The following is a well-established definition (see e.g. \cite{CR}).

Given $\epsilon > 0$, the {\it $\epsilon$-thick part of Teichm\"{u}ller space} is the set of $X\in\mathcal {T}(S)$ such that for any $\alpha\in\mathcal{C}(S)$
the hyperbolic length $l_X(\alpha)$ is not less than $\epsilon$. 

We now introduce the following terminology.

For $\epsilon >0$ and $\varepsilon_0\geq \epsilon$, the {\it $\varepsilon_0$-relative} $\epsilon$-thick part of Teichm\"{u}ller space is the subset of the $\epsilon$-thick part of Teichm\"{u}ller space
in which the length of each boundary component of $S$ is bounded from above by the constant $\varepsilon_{0}$.

Our first goal is to give a comparison between $d_L$ and $\delta_L$ on the $\varepsilon_0$-relative $\epsilon$-thick part of Teichm\"{u}ller space. 
This will be based on the following lemma, which, together with its proof, is analogous to Lemma 3.6 of Choi and Rafi \cite{CR} and its proof. 
The main tool used in this lemma is the technique of ``replacing an arc by a loop'', which is also the main tool in Choi and Rafi's lemma, 
and this technique was initiated by Minsky \cite{Minsky}. 
We prefer to give complete proofs rather than ideas of proofs, at the expense of repeating some of Choi and Rafi's arguments.

Given two functions $f$ and $g$ of a variable $t$, we shall use the notation `` $f \asymp g$ '' to express the fact that the ratio
$\displaystyle \frac{f(t)}{g(t)}$ is bounded above and below by some positive constants, with $t$ being in some domain which will be specified. 
Note that although we are following closely the arguments of Choi and Rafi in \cite{CR}, we are using the sign $\asymp$ differently.

 \begin{lemma}\label{Choi}

To any arc $\beta$  on $S$, we can associate a simple closed curve $\alpha$ on $S$ in such a way that the following holds
\begin{equation}\label{eq:beta}
l_X(\beta) \asymp l_X(\alpha),
\end{equation}
in which the variable is $X$, and where $X$ varies in the $\varepsilon_0$-relative $\epsilon$-thick part of Teichm\"uller space. 
(Thus, the multiplicative constants for $\asymp$ depend on $\epsilon$ and $\varepsilon_0$.)
 \end{lemma}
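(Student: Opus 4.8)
The plan is to associate to each arc $\beta$ a simple closed curve $\alpha$ by "replacing the arc by a loop," and then to control the length ratio $l_X(\beta)/l_X(\alpha)$ uniformly over the $\varepsilon_0$-relative $\epsilon$-thick part. The geometric picture: an essential arc $\beta$ has its two endpoints on (possibly the same, possibly two distinct) boundary components of $S$. I would construct $\alpha$ by running along $\beta$, then closing up the loop by traveling along the boundary component(s) on which $\beta$ lands, back to the start. Concretely, if $\beta$ goes from a boundary component $\partial_1$ to a boundary component $\partial_2$, the curve $\alpha$ is freely homotopic to the concatenation $\beta \cdot (\text{arc of } \partial_2) \cdot \bar\beta \cdot (\text{arc of } \partial_1)$ (or, when both endpoints lie on the same component, $\beta$ followed by a subarc of that single boundary component). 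This gives a well-defined free homotopy class of simple closed curve on $S$, which I take to be $\alpha$.

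\medskip

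First I would establish the upper bound $l_X(\alpha) \lesssim l_X(\beta)$. Since $\alpha$ is homotopic to a curve built from $\beta^X$ together with pieces of the boundary geodesics it meets, and since each boundary component has length at most $\varepsilon_0$ (by the definition of the $\varepsilon_0$-relative $\epsilon$-thick part), the geodesic representative $\alpha^X$ has length at most
\[
l_X(\alpha) \le 2\, l_X(\beta) + 2\varepsilon_0,
\]
which is bounded by a constant multiple of $l_X(\beta)$ once I observe that $l_X(\beta)$ is bounded below on the relative thick part (essential arcs meeting the boundary orthogonally cannot be arbitrarily short when the collar widths are controlled).

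\medskip

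The reverse bound $l_X(\beta) \lesssim l_X(\alpha)$ is the crux and where I expect the main obstacle. Here the danger is the standard one in this circle of ideas: if a boundary component along which $\alpha$ travels is very \emph{long}, the loop $\alpha$ might be short while the arc-part is not well controlled; conversely, if the arc $\beta$ wraps many times near a short interior geodesic before hitting the boundary, the comparison could degrade. The $\epsilon$-thickness hypothesis is precisely what rules out short interior curves, and the $\varepsilon_0$ bound controls the boundary lengths from above. I would make this quantitative using the collar lemma: in the $\epsilon$-thick part, every simple closed geodesic (in particular each boundary component) has an embedded collar of width bounded below in terms of $\epsilon$, so the geodesic arc $\beta^X$ must cross a definite-width collar whenever it approaches the boundary, forcing $l_X(\beta)$ to be comparable to the part of $\alpha^X$ that is not boundary-parallel. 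The right-angled hexagon and pentagon formulae \eqref{eq:hexagon1} and \eqref{eq:pentagon1} give the trigonometric comparison between the orthogeodesic arc $\beta^X$ and the geodesic $\alpha^X$ inside the pair-of-pants-type pieces cut out along the boundary; plugging in the bounds $l_X(\partial_i)\le\varepsilon_0$ and the lower length bounds from thickness yields the constant.

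\medskip

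Finally I would assemble the two inequalities, noting that all the multiplicative constants produced depend only on $\epsilon$ and $\varepsilon_0$ and not on $X$ within the relative thick part, which is exactly the uniformity claimed in \eqref{eq:beta}. The delicate point throughout is to keep track of which estimates require the lower bound $\epsilon$ (to control short interior curves and guarantee wide collars) and which require the upper bound $\varepsilon_0$ (to control the length of the boundary segments used to close up the loop); following Choi and Rafi \cite{CR} and the replacement technique of Minsky \cite{Minsky}, both are needed and both enter the final constant.
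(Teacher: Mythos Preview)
Your approach is essentially the paper's: replace the arc by a boundary curve of a regular neighborhood of $\beta$ together with the boundary components it meets, then compare lengths via the hexagon and pentagon formulae. The upper bound $l_X(\alpha)\le 2l_X(\beta)+2\varepsilon_0$ is exactly what the paper uses, and your invocation of the trigonometric formulae for the reverse bound is the right mechanism.

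There is, however, one genuine gap in your construction. When both endpoints of $\beta$ lie on the \emph{same} boundary component $\gamma$, your recipe ``$\beta$ followed by a subarc of that single boundary component'' is ambiguous: there are \emph{two} complementary subarcs, producing two distinct simple closed curves $\alpha$ and $\alpha'$ (these are the two boundary curves, other than $\gamma$, of the pair of pants that is a regular neighborhood of $\beta\cup\gamma$). This choice matters. First, one of them may be peripheral (homotopic to a puncture), in which case it has length zero and is not in $\mathcal{C}(S)$. Second, even when both are essential, the estimate $l_X(\beta)\lesssim l_X(\alpha')$ can fail for the \emph{shorter} curve: one can arrange $X$ in the $\varepsilon_0$-relative $\epsilon$-thick part with $l_X(\alpha')=\epsilon$ fixed while $l_X(\alpha)$ (hence $l_X(\beta)$) goes to infinity. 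The paper handles this by always taking $\alpha$ to be non-peripheral, and when both candidates are non-peripheral, taking the one of greater $X$-length; this guarantees that the relevant edge $a$ of the right-angled pentagon satisfies $a\ge l_X(\gamma)/4\ge\epsilon/4$, which is exactly what makes the pentagon formula \eqref{eq:pentagon1} yield $e^{l_X(\alpha)/2}\asymp e^{l_X(\beta)/2}$. Without this choice your reverse inequality does not go through.

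A second, more minor point: the paper first disposes of the case where $S$ itself is a pair of pants, running through five configurations and showing directly that both $l_X(\beta)$ and $l_X(\alpha)$ are bounded above and below by constants depending only on $\epsilon,\varepsilon_0$. Your sketch does not single this out; it is worth noting because in that case every candidate $\alpha$ is already a boundary component of $S$, so the ``regular neighborhood'' construction degenerates and one simply uses $l_X(\alpha)\asymp 1\asymp l_X(\beta)$.
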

\begin{proof}

In all this proof, we consider a fixed arc  $\beta$ representing an element of $\mathcal {B}$.

  We first assume that $S$ is homeomorphic to a pair of pants, that is, to a surface with three holes, a hole being either a boundary component or a puncture of $S$. 
  
From the hypothesis, the lengths of the boundary components of this pair of pants are bounded from below and from above. 
  
There are several cases to consider, depending on whether the pair of pants $S$ has $1,2$ or $3$ boundary components (the other holes being punctures). 
In each case, there are only finitely many homotopy classes of arcs $\beta$, and there are distinctions between the cases where $\beta$ joins
one or two distinct boundary components. 
We show by trigonometric formulas that the lengths of these arcs are bounded from below and from above, in terms of $\epsilon$ and $\varepsilon_0$. 
As we shall see, the estimates are simple, and they give an idea of what happens in the general case.

We will have, in each case, $l_X(\beta) \asymp 1$ and $l_X(\alpha) \asymp 1$, which will show that (\ref{eq:beta}) holds. 
The cases are represented in Figure \ref{subcases1} (i) to (v). We analyze separately each of these cases.

\medskip
\noindent {\bf Case (i)} The pair of pants $S$ has 3 boundary components, and $\beta$ joins two distinct bounday components, $\gamma$ and $\gamma'$. 

Let $\gamma'''$ be the third component. 
We set $b=l_X(\beta)$, $a=l_X(\gamma)/2$, $a'=l_X(\gamma')/2$ and $a''=l_X(\gamma'')/2$. 
From Formula (\ref{eq:hexagon1}) for right-angled hexagons, we have
\[\cosh b=\frac{\cosh a''+\cosh a \cosh a'}{\sinh a\sinh a'}.\]
From the hypothesis, we have $a\asymp 1$, $a'\asymp 1$ and $a''\asymp 1$. 
Therefore, $b\asymp 1$, as required.

\medskip
\noindent {\bf Case (ii)} The pair of pants $S$ has 3 boundary components, and the two endpoints of $\beta$ are on one boundary component, which we call $\gamma$.

Let $\gamma'$ and $\gamma''$ be the other two boundary components of $S$. 
We consider the three arcs $\delta,\delta'$ and $\delta''$ joining pairwise the three boundary components of $S$ perpendicularly. 
The arc $\beta$ intersects perpendicularly one of these arcs, and it divides $S$ into two (in general non-isometric) right-angled hexagons. 
We consider one of these hexagons, say, the one containing the boundary component $\gamma'$ (and the arc $\delta''$). 
This hexagon is divided by the are $\delta''$ and by (part of) $\delta$ into two isometric right-angled pentagons. 
Let $b=l_X(\beta)/2$, $a'=l_X(\gamma')/2$ and $d''=l_X(\delta'')$. 
Then, Formula (\ref{eq:pentagon1}) for right-angled pentagons gives:
\[\cosh b=\sinh a'\sinh d''.\]
Again, from the hypothesis, we have $a'\asymp 1$, and from Case (i) above, we have $d''\asymp 1$. This gives $b\asymp 1$.

\medskip
\noindent  {\bf Case (iii)} The pair of pants $S$ has a unique boundary component $\gamma$. 

In that case, the isometry type of this surface is completely determined by the length of $\gamma$. There is only one arc $\beta$ to consider, and it joins $\gamma$ to itself. 
We consider the bi-infinite geodesic line joining the two punctures of $S$, 
and a geodesic ray starting perpendicularly on the boundary curve $\gamma$ and converging to one of these punctures. 
These lines are represented in the middle figure of the case labelled (iii) in Figure \ref{subcases1}. 
We set $a=l_X(\gamma)/4$ and $b=l_X(\beta)/2$. 
In the quadrilateral with three right angles and one zero angle (at infinity) represented to the right of this figure, we have (see \cite{Fenchel} p. 89)
\begin{equation}\label{eq: cusp}
\sinh a\sinh b=1.
\end{equation}
Again, since  $a\asymp 1$, we deduce  $b\asymp 1$.

\medskip
\noindent  {\bf Case (iv)} The pair of pants  $S$ has one puncture and 2 boundary components, 
and the arc $\beta$ has its endpoints on distinct boundary components, $\gamma$ and $\gamma'$.  

In that case, we consider the two geodesic rays starting perpendicularly at the two boundary components of $S$ and converging to the puncture. 
Cutting the surface along these two rays and the arc $\beta$, we obtain two isometric pentagons with four right angles and one zero angle (at infinity). 
We consider one of these pentagons and we set $a'=l_X(\gamma')/2$, $a=l_X(\gamma)/2$ and $b=l_X(\beta)$. 
We now consider the infinite ray in that pentagon that starts perpendicularly at the edge labelled $b$ and converges to the cusp, as shown Figure \ref{subcases1} (iv). 
The edge of length $b$ of the pentagon is divided into two segments, of lengths $b_1$ and $b_2$ satisfying $b_1+b_2=b$. 
By Formula (\ref{eq: cusp}) used above in Case (iii), and since $a\asymp 1$ and $a'\asymp 1$, we deduce that $b_1\asymp 1$ and $b_2\asymp 1$, therefore $b\asymp 1$.

\medskip
\noindent  {\bf Case (v)} The pair of pants $S$ has one puncture and 2 boundary components, and the arc $\beta$ has its endpoints on the  boundary component, which we call $\gamma$.  

In that case, $\beta$ divides $S$ into two components, one of which contains the other boundary curve of $S$, which we call $\gamma'$. 
The arc of minimal length, $\beta'$, joining $\gamma$ to $\gamma'$, 
together with an arc contained in the geodesic ray starting perpendicularly at $\gamma'$ and ending at the puncture, 
cut this component into two isometric right-angled pentagons. 
Let $a'=l_X(\gamma')/2$, $b'=l_X(\beta')$ and $b=l_X(\beta)/2$. 
From Case (iv) considered above, we have $b'\asymp 1$.
Since $a'\asymp 1$, Formula (\ref{pentagon1}) for right-angled pentagons gives again $b\asymp 1$.

\medskip

This concludes the proof of the lemma in the case where $S$ is a pair of pants.

Thus, in the case where $S$ is a pair of pants, we have an inequality
\begin{equation}\label{eq:pants}
\sup_{\gamma \in \mathcal{B}\cup \mathcal{C}}\frac{l_X(\gamma)}{l_Y(\gamma)} \leq  C \sup_{\alpha \in \mathcal
{C}}\frac{l_X(\alpha)}{l_Y(\alpha)}  
\end{equation}
where $C$ is a constant depending on $\epsilon$ and
$\varepsilon_0$.\\

\begin{figure}[!hbp]
\psfrag{1}{(i)}
\psfrag{2}{(ii)}
\psfrag{3}{(iii)}
\psfrag{4}{(iv)}
\psfrag{5}{(v)}
\psfrag{a}{$a$}
\psfrag{a1}{$a$}
\psfrag{a2}{$a'$}
\psfrag{a3}{$a''$}
\psfrag{b1}{$b$}
\psfrag{b2}{$b'$}
\psfrag{b3}{$b''$}
\psfrag{b}{$\beta$}
\psfrag{d}{$d''$}
\psfrag{d1}{$\delta$}
\psfrag{d2}{$\delta'$}
\psfrag{d3}{$\delta''$}
\psfrag{g1}{$\gamma$}
\psfrag{g2}{$\gamma'$}
\psfrag{g3}{$\gamma''$}
\centering
\includegraphics[width=\linewidth]{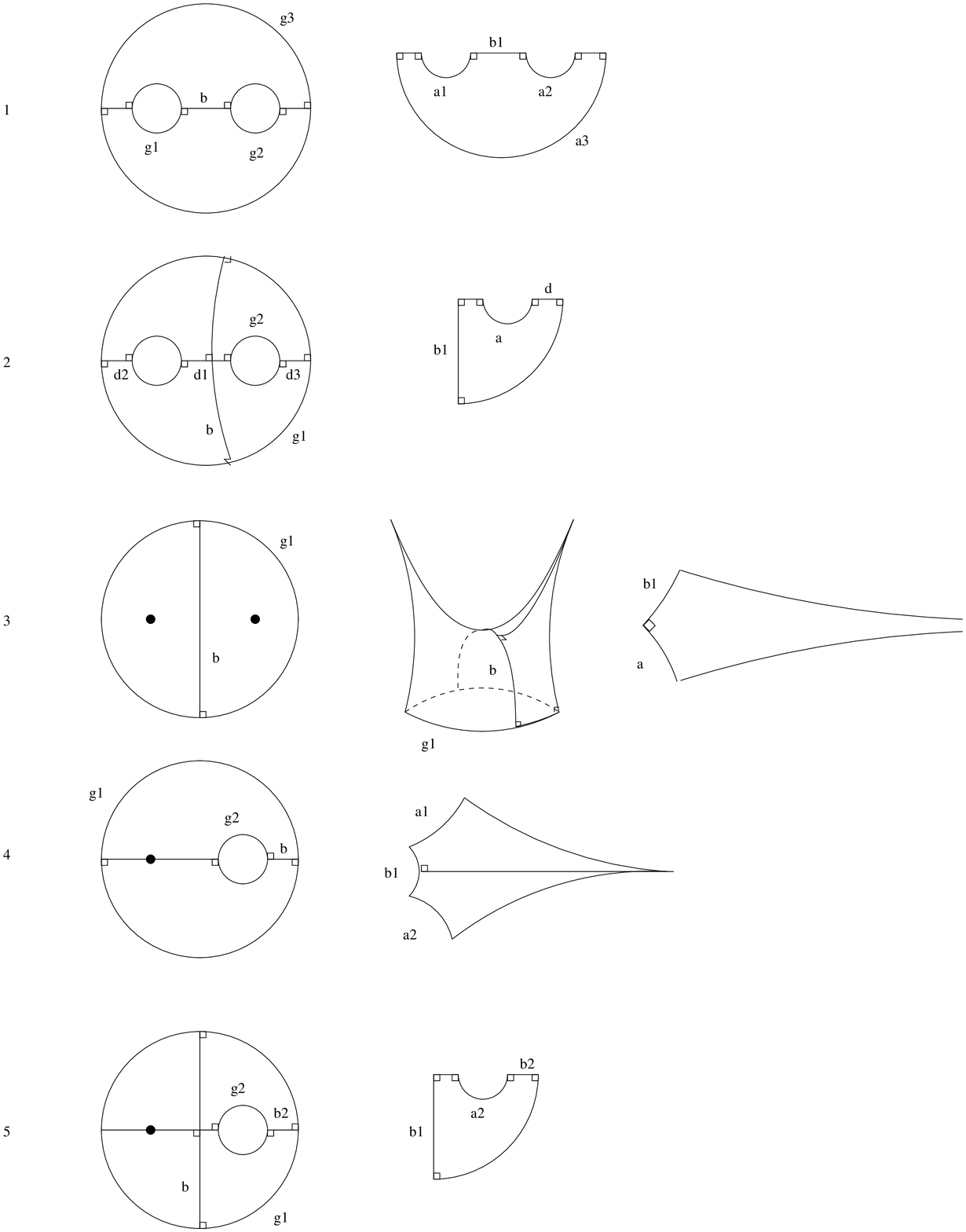}
\caption{\small{The various subases in Lemma \ref{Choi}, in the case where $S$ is a pair of pants (with boundary components or with punctures), and $\beta$ an arc joining boundary components.}}
\label{subcases1}
\end{figure}

Now we consider the case where $S$ is not a pair of pants.

As in the preceding case, we also show that we can associate to $\beta$ a simple closed curve $\alpha$ on $S$ such that 
$l_X(\beta) \asymp l_X(\alpha).$
 
Of course, in contrast to the preceding case, there are infinitely many homotopy classes of arcs $\beta$ to consider now.  
The idea of getting the closed curve $\alpha$ is similar to the one in the case where $S$ is a pair of pants, 
and instead of working in a pair of pants $S$ 
we work in a pair of pants that is the regular neighborhood of the union of $\beta$ with the boundary components of $S$ that it intersects. 
The curve $\alpha$ associated to the arc $\beta$ will be one of the boundary curves of this pair of pants, and this curve is no more necessarily a boundary curve of $S$; 
its geodesic length is bounded from below, but not necessarily from above. 
We now give the details.

Without loss of generality, we assume that $\beta$ is geodesic for the hyperbolic metric $X$. 

If the two endpoints of $\beta$ lie on distinct components $\gamma, \gamma'$ of
$\partial X$, then the boundary of a regular neighborhood of $\beta
\cup \gamma \cup \gamma'$ in $X$ consists of a single simple closed curve $\alpha$. 
Since $S$ is not a pair of pants, $\alpha$ is not peripheral and therefore
$\alpha \in \mathcal {C}(S)$. 
We take $\alpha$ to be the geodesic representative of its homotopy class with respect to the hyperbolic structure $X$.

If both endpoints of $\beta$ lie on the same component $\gamma$ of $\partial X$, 
then the boundary of a regular neighborhood of $\beta\cup \gamma$ consists of two disjoint simple closed curves. 
Both curves cannot be peripheral since $S$ is not a pair of pants. 
We take $\alpha$ to be in the homotopy class of a non-peripheral curve. 
In the case where both curves are non-peripheral, we take $\alpha$ to be in the homotopy class whose $X$-geodesic has greater length 
(and if both lengths are equal, we choose any one of the two curves). 
Again, we take $\alpha$ to be the geodesic representative in its homotopy class.

For this choice of the curve $\alpha$, we claim that (\ref{eq:beta}) holds.

We prove this claim by elementary hyperbolic trigonometry. 
In what follows, a ``geodesic pair of pants" means a pair of pants with geodesic boundary embedded in a hyperbolic surface.
We distinguish two cases.

\noindent \textbf{Case 1.}\  This is the case where $\gamma \neq\gamma'$. 
Let $P$ be the geodesic pair of pants in $X$ with boundary components $\gamma, \gamma', \alpha$ 
and consider one of the two canonical right-angled geodesic hexagons that divide $P$. 
Let $a =l_X(\gamma)/2, a' =  l_X(\gamma')/2, c = l_X(\alpha)/ 2$ and $b = l_X(\beta)$. 
From the formula for right-angled hexagons (Formula (\ref{eq:hexagon1}) above), we have
\begin{equation}\label{eq:hexa}
\cosh c + \cosh a\cosh a' = \sinh a\sinh a'\cosh b.
\end{equation}

Since $a, a' \leq \frac{\varepsilon_0}{2}$ and from the fact that the continuous function $x\mapsto\displaystyle \frac{\sinh x}{x}$ 
has a finite limit as $x$ converges to zero, there is a constant
$C(\varepsilon_0)>0$ depending on $\varepsilon_0$ such that
\[\sinh a\leq C(\varepsilon_0)a   \hbox{ and } \sinh a'\leq
C(\varepsilon_0)a'.\]
 We shall also use the well-known inequalities
\[x< \sinh x   \hbox{ and }  \frac{e^x}{2}< \cosh x< e^x  \hbox{ for all } \ x> 0.\]

In particular, we have $\cosh x  \asymp e^x$ for all $x>0$.

By assumption, we have $\displaystyle a, a' \geq \frac{\epsilon}{2}$. 
Therefore,
\[\sinh a\sinh a'\cosh b \geq a\cdot a'\cdot \frac{e^b}{2} \geq \frac{\epsilon^2e^b}{8}\]
and 
\[\sinh a\sinh a'\cosh b \leq C(\varepsilon_0)^2 a\cdot a' \cdot e^b \leq \frac{C(\varepsilon_0)^2\varepsilon_0^2e^b}{4}.\]
Hence,
\begin{equation}\label{eq5}
\sinh a\sinh a'\cosh b \asymp e^b.
\end{equation}

On the other hand, we have
$$\cosh a\cosh a' \leq \cosh(a + a') \leq \cosh \varepsilon_0,$$
$$\cosh c \geq \cosh \frac{\epsilon}{2}>0.$$
We get
\begin{equation}\label{eq6}
\cosh c \leq \cosh c + \cosh a\cosh a'\leq (1 + \frac{\cosh \varepsilon_0}{\cosh \frac{\epsilon}{2}})\cosh c,
\end{equation}
whence
\begin{equation}\label{eq7}
e^c \asymp \cosh c \asymp \cosh c + \cosh a\cosh a'\hbox{ for } c\geq 0.
\end{equation}
Now considering $c$ and $b$ as functions of hyperbolic structures on $S$, 
it follows from  (\ref{eq5}),(\ref{eq7}) and (\ref{eq:hexa}) that
\[e^c \asymp e^b,\] 
for hyperbolic structures varying in the $\varepsilon_0$-relative $\epsilon$-thick part of Teichm\"uller space $\mathcal{T}(S)$.

Thus, there is a constant $K>0$ depending on $\epsilon$ and $\varepsilon_0$ such that
\begin{equation}\label{eq:K}
|c-b| = | l_X(\alpha)/2 - l_X(\beta)| \leq K.
\end{equation}
Since $l_X(\alpha) \geq \epsilon$, we have
\begin{equation}\label{eq:KK}
 l_X(\beta) \leq l_X(\alpha)/2 + K \leq (\frac{1}{2} +
\frac{K}{\epsilon}) l_X(\alpha).
\end{equation}

We now distinguish two sub-cases: 

1) If $l_X(\beta)\geq 1$, then we have from (\ref{eq:K}):
\[l_X(\alpha) \leq 2l_X(\beta) + 2K \leq 2(1 + K) l_X(\beta).\]
Combined with (\ref {eq:KK}), this gives, in this sub-case, $l_X(\alpha) \asymp l_X(\beta)$. 

2) If $l_X(\beta)< 1$, then from the definition of $\alpha$, we have
$$l_X(\alpha)\leq 2l_X(\beta) + l_X(\gamma) + l_X(\gamma')
\leq 2l_X(\beta) + 2\varepsilon_0< 2 + 2\varepsilon_0.$$ 
As a result, $l_X(\alpha)$ is bounded below and above. 
Now we use Formula (\ref{eq:hexagon1}) again: since $a, a'$ and $c =  l_X(\alpha)/2$ are bounded
below and above, $b = l_X(\beta)$ is bounded below and above.

Thus, in both sub-cases, we have $l_X(\alpha) \asymp l_X(\beta)$.

\noindent \textbf{Case 2.}\  Next we consider the case where $\gamma = \gamma'$.
Let $P$ be the geodesic pair of pants in $X$ with boundary
components $\gamma, \alpha$ and the geodesic representative of the
third simple closed curve, which we now call $\alpha'$, which is in the homotopy class of a regular neighborhood of $\beta \cup \gamma$. 
The curve $\alpha'$ may be peripheral, and in that case  we say that the geodesic is at the puncture and has length zero.
We take the canonical division of the geodesic pair of pants $P$ into two isometric right-angled hexagons,
when  $\alpha'$ is not peripheral, or two isometric pentagons with four right-angles and one zero angle, 
when $\alpha'$ is peripheral.  
By symmetry, the arc $\beta$ divides these two geodesic regions into four geodesic pieces,  
two of them being isometric right-angled pentagons with edges originally contained in $\alpha$
and the other two being isometric right-angled pentagons if $\alpha'$ is not peripheral, 
or isometric squares with three right-angles and one zero angle otherwise. 

\begin{figure}[!hbp]
\psfrag{a1}{$\alpha$}
\psfrag{a2}{$\alpha'$}
\psfrag{b}{$\beta$}
\psfrag{g}{$\gamma$}
\psfrag{q}{$Q$}
\centering
\includegraphics[width=.4\linewidth]{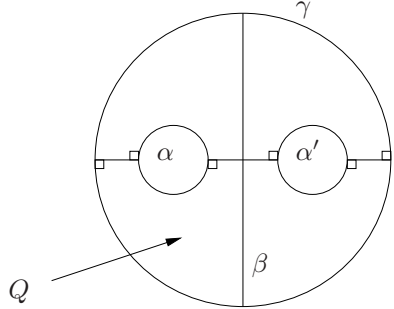}
\caption{\small{$Q$ is the pentagon referred to in the proof of Lemma \ref{Choi}.}}
\label{pentagon2}
\end{figure}

Let $Q$ be one of the two pentagons that have edges originally contained in $\alpha$ 
(Figure \ref{pentagon2}, in the case where $\alpha'$ is not peripheral).
Let $b = l_X(\beta)/2$, let $c = l_X(\alpha)/2$ and let $a$ be the length of the edge of $Q$ that arises from $\gamma$. 
From the formula for right-angled pentagons (Equation (\ref{eq:pentagon1}) above), we have
\begin{equation}\label{eq:penta}
 \cosh c = \sinh b\sinh a.
\end{equation}

It is clear that $a\leq l_X(\gamma)/2$.
By applying the pentagon formula (which is valid in the limiting case where $c$ has length zero) to the pentagon which together with $Q$ makes a hexagon of $P$, 
we see that our choice of $\alpha$ (of length not less than the length of $\alpha'$) implies that $a\geq l_X(\gamma)/4$.

Since $\epsilon \leq l_X(\gamma)\leq\varepsilon_0$, we have $\epsilon/4 \leq a \leq \varepsilon_0/2$.

Recall that there is a constant $C(\varepsilon_0)>0$ depending on $\varepsilon_0$
such that $a \leq \sinh a \leq C(\varepsilon_0)a $. 
We get
\begin{equation}
\sinh b \sinh a > \frac{e^b\cdot a}{2} > \frac{e^b \cdot \epsilon}{8}
\end{equation}
and
\begin{equation}
\sinh b \sinh a < e^b\cdot C(\varepsilon_0)\cdot a < \frac{C(\varepsilon_0)\cdot\varepsilon_0\cdot  e^b}{2}.
\end{equation}
The two preceding inequalities give $\sinh b \sinh a\asymp e^b$ and it follows from (\ref{eq:penta}) that 
\[e^c \asymp e^b.\]
Thus, there is a constant $K>0$ depending on $\epsilon$ and $\varepsilon_0$ such that
$$|c - b| = | l_X(\alpha)/2 - l_X(\beta)/2 | \leq K.$$
Note that $l_X(\alpha) \geq \epsilon$ and $l_X(\alpha) \leq
l_X(\beta) + l_X(\gamma) \leq l_X(\beta) + \varepsilon_0$, we can
use the same argument as in  Case 1 to conclude that
$l_X(\beta) \asymp l_X(\alpha)$.\\

The lemma is proved.
\end{proof}

From Lemma \ref{Choi}, we deduce the following
\begin{proposition}\label{prop:bi}

Let $0<\epsilon\leq\varepsilon_0$ be two positive numbers.
Then there is a positive constant $C>0$, depending only on $\epsilon$ and $\varepsilon_0$ and the type of $S$,
such that, for any $X,Y$ in the $\varepsilon_0$-relative $\epsilon$-thick part of $\mathcal{T}(S)$, 
we have
$$ \sup_{\alpha \in \mathcal
{C}}\frac{l_X(\alpha)}{l_Y(\alpha)}
\leq
\sup_{\alpha \in \mathcal{B}\cup\mathcal{C}}\frac{l_X(\alpha)}{l_Y(\alpha)}
\leq C \sup_{\alpha \in \mathcal
{C}}\frac{l_X(\alpha)}{l_Y(\alpha)}.$$
\end{proposition}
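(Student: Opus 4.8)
The plan is to derive both inequalities directly from Lemma \ref{Choi}, the left one being essentially automatic and the right one carrying all the content. For the left inequality I would simply note the inclusion $\mathcal{C}\subseteq\mathcal{B}\cup\mathcal{C}$: a supremum taken over a larger index set dominates the supremum over the smaller one, so $\sup_{\alpha\in\mathcal{C}} l_X(\alpha)/l_Y(\alpha)\leq\sup_{\alpha\in\mathcal{B}\cup\mathcal{C}} l_X(\alpha)/l_Y(\alpha)$ with nothing further to prove.

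For the right inequality, the key observation is that the two suprema differ only through the essential arcs: every simple closed curve, including the boundary-parallel ones lying in $\mathcal{B}\cap\mathcal{C}$, already contributes to $\sup_{\alpha\in\mathcal{C}}$. Hence it suffices to bound $l_X(\beta)/l_Y(\beta)$ for an arbitrary arc $\beta$ by a uniform constant times $\sup_{\alpha\in\mathcal{C}} l_X(\alpha)/l_Y(\alpha)$. To this end I would invoke Lemma \ref{Choi} to attach to $\beta$ a simple closed curve $\alpha=\alpha(\beta)\in\mathcal{C}$, read as depending only on the topological data so that the same $\alpha$ serves simultaneously for $X$ and for $Y$, together with constants $0<c_1\leq c_2$ depending only on $\epsilon$, $\varepsilon_0$ and the type of $S$ such that $c_1\, l_Z(\alpha)\leq l_Z(\beta)\leq c_2\, l_Z(\alpha)$ for every $Z$ in the $\varepsilon_0$-relative $\epsilon$-thick part.

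I would then use the telescoping identity
$$\frac{l_X(\beta)}{l_Y(\beta)}=\frac{l_X(\beta)}{l_X(\alpha)}\cdot\frac{l_X(\alpha)}{l_Y(\alpha)}\cdot\frac{l_Y(\alpha)}{l_Y(\beta)},$$
estimating the first factor by $c_2$ and the last by $1/c_1$, which gives $l_X(\beta)/l_Y(\beta)\leq (c_2/c_1)\, l_X(\alpha)/l_Y(\alpha)\leq (c_2/c_1)\sup_{\gamma\in\mathcal{C}} l_X(\gamma)/l_Y(\gamma)$. Setting $C=\max\{1,c_2/c_1\}$ and taking the supremum over all arcs $\beta$ on the one hand and over all $\gamma\in\mathcal{C}$ on the other yields the right-hand inequality. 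The pair-of-pants case is already subsumed, since there Lemma \ref{Choi} gives $l_X(\beta)\asymp 1\asymp l_X(\alpha)$ for a boundary curve $\alpha$, which is precisely inequality (\ref{eq:pants}).

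The main obstacle, and the step I would treat most carefully, is the uniformity built into this chain: one and the same curve $\alpha$ must work for both $X$ and $Y$, and the constants $c_1,c_2$ must be independent of the particular arc $\beta$. Both are exactly what Lemma \ref{Choi} provides, its multiplicative constants depending only on $\epsilon$ and $\varepsilon_0$, provided the association $\beta\mapsto\alpha$ is interpreted topologically; at the one place where the construction in Lemma \ref{Choi} selects, by length, between two candidate curves, I would observe that the underlying estimate holds for either candidate, so fixing one of them uniformly is harmless. Once this uniformity is secured, the chaining argument above is routine and completes the proof.
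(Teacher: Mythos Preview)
Your approach coincides with the paper's: the paper's entire proof is ``The first inequality is trivial, and the second one follows from Lemma~\ref{Choi}'', and you have simply written out the telescoping
\[
\frac{l_X(\beta)}{l_Y(\beta)}=\frac{l_X(\beta)}{l_X(\alpha)}\cdot\frac{l_X(\alpha)}{l_Y(\alpha)}\cdot\frac{l_Y(\alpha)}{l_Y(\beta)}
\]
that this sentence leaves implicit.

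One correction to the care you take at the end. In Case~2 of the proof of Lemma~\ref{Choi}, where the regular neighbourhood of $\beta\cup\gamma$ has two non\nobreakdash-peripheral boundary curves $\alpha,\alpha'$, your claim that ``the underlying estimate holds for either candidate'' is not true: the pentagon formula $\cosh(l_X(\alpha')/2)=\sinh(l_X(\beta)/2)\sinh a'$ allows the side $a'$ coming from $\gamma$ to be arbitrarily small even in the $\varepsilon_0$\nobreakdash-relative $\epsilon$\nobreakdash-thick part, so $l_X(\alpha')$ can remain bounded while $l_X(\beta)$ is large. What the proof of the lemma actually yields, uniformly in $X$, is
\[
l_X(\beta)\ \asymp\ \max\bigl(l_X(\alpha),\,l_X(\alpha')\bigr),
\]
since the chosen curve is always the longer one. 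This is enough for your purposes: for any positive reals one has
\[
\frac{\max\bigl(l_X(\alpha),l_X(\alpha')\bigr)}{\max\bigl(l_Y(\alpha),l_Y(\alpha')\bigr)}
\ \le\ \max\Bigl(\frac{l_X(\alpha)}{l_Y(\alpha)},\ \frac{l_X(\alpha')}{l_Y(\alpha')}\Bigr)
\ \le\ \sup_{\gamma\in\mathcal{C}}\frac{l_X(\gamma)}{l_Y(\gamma)},
\]
and this plugs into your telescoping in place of the single ratio $l_X(\alpha)/l_Y(\alpha)$. With that amendment the argument is complete and identical in spirit to the paper's.
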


\begin{proof}

The first inequality is trivial, and the second one follows from Lemma \ref{Choi}.
\end{proof}

The following theorem follows then from the above proposition.

\begin{theorem}\label{th:com}
Let $S$ be a surface of topologically finite type.
Fix two positive constants $\epsilon\leq\varepsilon_0$.
Then there exists a constant $K$ depending on $\epsilon$, $\varepsilon_{0}$ and the topological type of $S$ such that, 
for any $X,Y$ in the $\varepsilon_{0}$-relative $\epsilon$-thick part of $\mathcal {T}(S)$, one has
\[d_L(X,Y) \leq \delta_L(X,Y) \leq d_L(X,Y) + K.\]
\end{theorem}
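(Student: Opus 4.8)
The plan is to reduce everything to Proposition \ref{prop:bi}, which already contains all the geometry; the two inequalities in the theorem are then purely formal consequences of the definitions of $d_L$ and $\delta_L$. To keep the bookkeeping transparent, I would abbreviate
\[
A(X,Y)=\sup_{\gamma\in\mathcal{C}}\frac{l_X(\gamma)}{l_Y(\gamma)},\qquad
B(X,Y)=\sup_{\gamma\in\mathcal{B}\cup\mathcal{C}}\frac{l_X(\gamma)}{l_Y(\gamma)},
\]
so that $d_L(X,Y)=\log\max\{A(X,Y),A(Y,X)\}$ and $\delta_L(X,Y)=\log\max\{B(X,Y),B(Y,X)\}$.

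For the left-hand inequality $d_L\leq\delta_L$, I would simply use the inclusion $\mathcal{C}\subseteq\mathcal{B}\cup\mathcal{C}$, which gives $A(X,Y)\leq B(X,Y)$ and $A(Y,X)\leq B(Y,X)$ termwise, hence $\max\{A(X,Y),A(Y,X)\}\leq\max\{B(X,Y),B(Y,X)\}$ and, after taking logarithms, $d_L(X,Y)\leq\delta_L(X,Y)$. This step uses nothing about the thick part and holds on all of $\mathcal{T}(S)$.

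For the right-hand inequality I would invoke Proposition \ref{prop:bi} twice. Applied to the ordered pair $(X,Y)$ it yields $B(X,Y)\leq C\,A(X,Y)$, and applied to $(Y,X)$ --- both structures still lying in the $\varepsilon_0$-relative $\epsilon$-thick part --- it yields $B(Y,X)\leq C\,A(Y,X)$, with the same constant $C=C(\epsilon,\varepsilon_0,S)$. Taking the maximum of these two bounds and then logarithms gives
\[
\delta_L(X,Y)=\log\max\{B(X,Y),B(Y,X)\}\leq\log\bigl(C\max\{A(X,Y),A(Y,X)\}\bigr)=\log C+d_L(X,Y),
\]
so the claim holds with $K=\log C$.

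The point to stress is that there is no genuine obstacle left at this stage: all the difficulty has been absorbed into Lemma \ref{Choi} and its consequence Proposition \ref{prop:bi}, where the arc-to-loop replacement and the trigonometric estimates confine $B$ between $A$ and $C\,A$. The only care needed here is the symmetric application of the proposition to both $(X,Y)$ and $(Y,X)$, so that the two competitors inside the outer maxima are controlled simultaneously by the single constant $C$; this is precisely what makes the additive constant $K=\log C$ uniform over the $\varepsilon_0$-relative $\epsilon$-thick part.
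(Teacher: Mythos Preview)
Your proof is correct and follows essentially the same route as the paper: both reduce the second inequality to two applications of Proposition~\ref{prop:bi} (one for each ordered pair), then pass to the maximum and take logarithms. Your bookkeeping is in fact cleaner than the paper's, which concludes by ``taking logarithms and summing the two equations'' to obtain $K=\frac{\log C}{2}$; your direct use of $\max\{B(X,Y),B(Y,X)\}\leq C\max\{A(X,Y),A(Y,X)\}$ yielding $K=\log C$ is the transparent version of the same step.
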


\begin{proof}
The first inequality is trivial, and it holds without the assumptions on the bounds on the geometry of the structures $X$ and $Y$. 
We prove the second one.

From Proposition \ref{prop:bi}, there exists a constant $C$ depending on $\epsilon$ and $\varepsilon_0$ such that
\[\sup_{\alpha \in \mathcal{C}(S)\cup \mathcal{B}(S)}\frac{l_X(\alpha)}{l_Y(\alpha)} \leq C \sup_{\alpha \in
\mathcal {C}(S)}\frac{l_X(\alpha)}{l_Y(\alpha)}\]
and
\[\sup_{\alpha \in  \mathcal{C}(S)\cup \mathcal{B}(S)}\frac{l_Y(\alpha)}{l_X(\alpha)} \leq C \sup_{\alpha \in
\mathcal {C}(S)}\frac{l_Y(\alpha)}{l_X(\alpha)}\]
By taking logarithms and summing the two equations, we get
$$\delta_L(X,Y) \leq d_L(X,Y) + \frac{\log C}{2}.$$
\end{proof}

Let us also note the following:

\begin{theorem}\label{th:com2}
Let $S$ be a surface of topologically finite type.
Fix two positive constants $\epsilon\leq\varepsilon_0$.
Then there exists a constant $K$ depending on $\epsilon$, $\varepsilon_{0}$ and the topological type of $S$ such that, 
for any $X,Y$ in the $\varepsilon_{0}$-relative $\epsilon$-thick part of $\mathcal {T}(S)$, we have
\[d(X,Y) \leq \delta_L(X,Y) \leq d(X,Y) + K\]
and 
\[\overline{d}(X,Y) \leq \delta_L(X,Y) \leq \overline{d}(X,Y) + K.\]
\end{theorem}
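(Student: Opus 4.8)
The plan is to reduce the two displayed chains to a single symmetry estimate. Directly from the definitions one has
\[
\delta_L(X,Y)=\log\max\Big\{\sup_{\gamma\in\mathcal{B}\cup\mathcal{C}}\frac{l_X(\gamma)}{l_Y(\gamma)},\ \sup_{\gamma\in\mathcal{B}\cup\mathcal{C}}\frac{l_Y(\gamma)}{l_X(\gamma)}\Big\}=\max\{d(X,Y),\overline{d}(X,Y)\}.
\]
Hence the left-hand inequalities $d\le\delta_L$ and $\overline{d}\le\delta_L$ are immediate and require no hypothesis on the geometry. Moreover $\delta_L\le d+K$ amounts to $\overline{d}(X,Y)\le d(X,Y)+K$ and $\delta_L\le\overline{d}+K$ amounts to $d(X,Y)\le\overline{d}(X,Y)+K$, so the whole statement is equivalent to the additive quasi-symmetry
\[
|\,d(X,Y)-\overline{d}(X,Y)\,|\le K
\]
for $X,Y$ in the $\varepsilon_0$-relative $\epsilon$-thick part. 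This is what I would establish.

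Second, I would transfer the problem to the boundaryless double. By Corollary \ref{co:double} we have $d(X,Y)=d(X^d,Y^d)$ and $\overline{d}(X,Y)=\overline{d}(X^d,Y^d)=d(Y^d,X^d)$; since $S^d$ has empty boundary, $\mathcal{B}(S^d)=\emptyset$, and these are exactly Thurston's asymmetric metric on $\mathcal{T}(S^d)$ evaluated at $(X^d,Y^d)$ and at $(Y^d,X^d)$. I would then check that $X^d$ and $Y^d$ lie in a genuine $\epsilon'$-thick part of $\mathcal{T}(S^d)$, with $\epsilon'$ depending only on $\epsilon$, $\varepsilon_0$ and the topology of $S$. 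Indeed, a closed geodesic of $S^d$ that is short enough has, by the collar lemma, a collar so wide that it cannot be crossed by the image of $\partial S$ (whose components have length $\le\varepsilon_0$); such a geodesic is therefore disjoint from the fixed locus of the involution, hence is contained in one of the two copies of $X$ making up $X^d$ and so has length $\ge\epsilon$. Together with the lower bound on arc lengths furnished by Lemma \ref{Choi}, this bounds the systoles of $X^d$ and $Y^d$ from below.

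The last and decisive step is the additive symmetry of Thurston's asymmetric metric on the thick part of a surface without boundary, and this is the main obstacle. I would combine the universal comparison $d(X^d,Y^d)\le 2\,d_{T}(X^d,Y^d)$, coming from the Wolpert--Sorvali inequality $l_{Y^d}(\gamma)/l_{X^d}(\gamma)\le e^{2 d_{T}(X^d,Y^d)}$ (where $d_{T}$ denotes the Teichm\"uller metric), with the reverse estimate $2\,d_{T}(X^d,Y^d)\le d(X^d,Y^d)+O(1)$ valid on the $\epsilon'$-thick part, which is the content of the comparison between the Teichm\"uller and the Lipschitz (Thurston) metrics on the thick part established by Choi and Rafi \cite{CR}. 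Because $d_{T}$ is symmetric, these give
\[
d(Y^d,X^d)\le 2\,d_{T}(Y^d,X^d)=2\,d_{T}(X^d,Y^d)\le d(X^d,Y^d)+O(1),
\]
and, exchanging the roles of $X^d$ and $Y^d$, the reverse inequality as well. Transporting back through Corollary \ref{co:double} yields $|\,d(X,Y)-\overline{d}(X,Y)\,|\le K$ with $K$ depending only on $\epsilon'$ (hence on $\epsilon,\varepsilon_0$) and on the topological type, which is the desired conclusion.

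I expect the genuine difficulty to be concentrated in this final step: the inequalities $d\le\delta_L$ are formal, and the passage to the double is routine bookkeeping once the thickness of $X^d,Y^d$ is secured, whereas the \emph{additive} (rather than merely multiplicative) control of the asymmetry of Thurston's metric is exactly where the thick-part hypothesis is essential and where a real comparison theorem must be invoked. An alternative that avoids doubling would be to stay on $S$: by Proposition \ref{prop:bi} the quantity $\log\sup_{\mathcal{B}\cup\mathcal{C}}(l_Y/l_X)$ differs from $\log\sup_{\mathcal{C}}(l_Y/l_X)$ by an additive constant, and likewise for the reciprocal ratio, so that the problem reduces to the additive symmetry of the closed-curve length-ratio functional on the relative thick part; this is again the closed-surface statement above, reached without the detour through $S^d$.
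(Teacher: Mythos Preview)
Your approach is essentially the paper's: both reduce to the closed double $S^d$ via Corollary~\ref{co:double} and Corollary~\ref{co:double2} and then invoke the Choi--Rafi comparison \cite{CR} on the thick part of $\mathcal{T}(S^d)$. The paper cites Choi--Rafi's Theorem~B directly in the form $\delta_L(X^d,Y^d)\le d(X^d,Y^d)+K$, while you factor this through the Teichm\"uller metric via Wolpert's inequality and supply the verification (left implicit in the paper) that $X^d,Y^d$ lie in a thick part; these are differences of exposition, not of strategy---and your appeal to Lemma~\ref{Choi} for the systole of $X^d$ is superfluous, since the collar argument you give first already handles all closed geodesics of $S^d$, symmetric or not.
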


\begin{proof}
We only prove the first statement, since the second one follows with the same arguments.
The inequality $d(X,Y) \leq \delta_L(X,Y)$ follows from the definitions.
For the other inequality, we use the doubled surfaces. By Theorem B in Choi and Rafi \cite{CR},  there exists a constant $K$ depending
on $\epsilon, \ \varepsilon_0$ and the  topological type of $S$ such that
\[
\delta_L (X^d, Y^d)\le d(X^d, Y^d)+K.
\]
Using Corollary \ref{co:double} and Corollary \ref{co:double2}, this implies that $\delta_L(X,Y)\le d(X,Y)+K.$ This proves the required result.
\end{proof}

A natural question which arises after this, is what happens if we omit the condition on the metrics $X$ and $Y$ being in the $\varepsilon_{0}$-relative $\epsilon$-thick part of $\mathcal {T}(S)$. The following simple example shows that the second inequality in the statement of Theorem \ref{th:com2} cannot hold in general.

\begin{example}
For every $t\geq 1$, let $X_t$ be the hyperbolic structure on a pair of pants with three geodesic boundary components of equal length $t$. Then, it clear that for every $t\geq 1$, we have $d_T(X_1, X_t)=\log t$. Now as $t\to\infty$, the common length $l_t$ of the three seams joining pairwise the three boundary components of the pair of pants is of the order of $e^{-\frac{t}{2}}$. Indeed,
by Formula (\ref{eq:hexagon1}) for right-angles hexagons, we have $\cosh l_t=(\cosh^2 t +\cosh t)/\sinh^2 t$, which gives 
\[\cosh l_t-1=\frac{1+\cosh t}{\sinh^2 t}=\frac{1+\cosh t}{\cosh^2 t -1}= 
\frac{1}{\cosh t -1}\]
which implies $l_t^2\sim4e^{-t}$, that is, $l_t\sim 2e^{-\frac{t}{2}}$.
This gives $\delta_L(X,Y)\asymp\log e^{\frac{t}{2}}=\frac{t}{2}$.
\end{example}

\section{The topology induced by the metrics on $\mathcal{T}(S)$}

We also have the following

\begin{theorem}
Let $S$ be a surface of topologically finite type and 
let $(X_n)_{n\geq0}$ be a sequence of elements in  $\mathcal {T}(S)$.
Then,
$$\lim_{n\rightarrow \infty}d_L(X_n, X_0) = \infty
\ \textmd{if} \ \textmd{and} \ \textmd{only} \ \textmd{if} \
\lim_{n\rightarrow \infty}\delta_L(X_n, X_0) = \infty.$$
\end{theorem}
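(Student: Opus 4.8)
The plan is to exploit the elementary inequality $d_L \leq \delta_L$ together with the comparison on relative thick parts furnished by Theorem \ref{th:com}. One implication is immediate: since the supremum defining $d_L$ is taken over $\mathcal{C}(S)$, which is contained in $\mathcal{B}(S)\cup\mathcal{C}(S)$, we have $d_L(X,Y)\leq\delta_L(X,Y)$ for all $X,Y$. Hence if $d_L(X_n,X_0)\to\infty$ then $\delta_L(X_n,X_0)\geq d_L(X_n,X_0)\to\infty$. So the whole content of the statement is the reverse implication.

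For the reverse implication I would argue by contraposition: assume that $d_L(X_n,X_0)$ does \emph{not} tend to infinity, and deduce that $\delta_L(X_n,X_0)$ does not either. Under this assumption there is a subsequence $(X_{n_k})$ and a constant $M$ with $d_L(X_{n_k},X_0)\leq M$ for all $k$. The crucial step is to observe that this single bound confines all the $X_{n_k}$, as well as $X_0$ itself, to one fixed relative thick part determined by $M$ and by the fixed structure $X_0$. Indeed, since the boundary components of $S$ belong to $\mathcal{C}(S)$, the bound yields $l_{X_{n_k}}(\beta)\leq e^{M}l_{X_0}(\beta)$ for every boundary curve $\beta$, so all boundary lengths are bounded above by $\varepsilon_0:=e^{M}\max_{\beta\subset\partial S}l_{X_0}(\beta)$. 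For the thickness lower bound, I would use that for every $\gamma\in\mathcal{C}(S)$ the bound gives $l_{X_{n_k}}(\gamma)\geq e^{-M}l_{X_0}(\gamma)\geq e^{-M}s_0$, where $s_0>0$ is the systole of the fixed surface $X_0$ (the shortest length of a closed geodesic, which is positive for a fixed hyperbolic surface of finite type). Thus each $X_{n_k}$ lies in the $\varepsilon_0$-relative $\epsilon$-thick part with $\epsilon:=e^{-M}s_0$, and one checks that $\epsilon\leq\varepsilon_0$ because $s_0\leq\max_{\beta}l_{X_0}(\beta)$ and $e^{-M}\leq e^{M}$; the same estimates show that $X_0$ lies in this relative thick part as well.

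I expect this confinement to be the main (and only mildly delicate) point: it is essential that the lower bound on the systole be extracted from the \emph{fixed} structure $X_0$ rather than from the varying $X_{n_k}$, and equally essential that boundary curves be counted among the elements of $\mathcal{C}(S)$ so that their lengths get controlled from above. Once both bounds are in place, everything else is routine.

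Finally, with $X_{n_k}$ and $X_0$ all in the $\varepsilon_0$-relative $\epsilon$-thick part, Theorem \ref{th:com} applies and produces a constant $K=K(\epsilon,\varepsilon_0,S)$ such that $\delta_L(X_{n_k},X_0)\leq d_L(X_{n_k},X_0)+K\leq M+K$ for all $k$. Hence $\delta_L(X_{n_k},X_0)$ is bounded, so $\delta_L(X_n,X_0)$ does not tend to infinity. This establishes the contrapositive, hence the reverse implication, and together with the easy direction above it proves the equivalence.
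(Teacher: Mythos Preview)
Your proof is correct and follows essentially the same approach as the paper's own proof: use $d_L\leq\delta_L$ for the easy direction, and for the hard direction pass to a bounded subsequence, confine it (together with $X_0$) to a fixed $\varepsilon_0$-relative $\epsilon$-thick part, and invoke Theorem~\ref{th:com}. You spell out the confinement step more explicitly than the paper does (which simply asserts it ``from the definition of the length-spectrum metric''), giving concrete values for $\epsilon$ and $\varepsilon_0$ and checking $\epsilon\leq\varepsilon_0$; otherwise the arguments coincide.
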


\begin{proof}
Since $d_L(X,Y) \leq \delta_L(X,Y)$, it is clear that $d_L(X_n, X_0)\to\infty$ implies
$\delta_L(X_n, X_0) \to\infty$.

Now assume that $\delta_L(X_n, X_0)\to\infty$. 
We show that $d_L(X_n, X_0) \to \infty$. 
Suppose not.
Then there is a constant $K$ and a subsequence $(X_{n_i})$ such that $d_L(X_{n_i}, X_0) \leq K$. 
From the definition of the length-spectrum metric, all the $X_{n_i}$, $i\geq1$,
and $X_0$ must lie in some $\epsilon$-thick part of $\mathcal{T}(S)$ and satisfy the condition that all the boundary components
are bounded above by some constant $\varepsilon_0$. 
By Theorem \ref{th:com}, $\delta_L(X_{n_i}, X_0)$ is bounded, which is a contradiction.
\end{proof}

Let us also note the following

\begin{theorem}Let $S$ be a surface of topologically finite type and
let $(X_n)_{n\ge 0}$ be a sequence of elements in $\mathcal{T}(S)$.
Then,
\[
 lim_{n\to\infty}d(X_n, X_0)=\infty\iff 
\ lim_{n\to\infty}\delta_L(X_n, X_0)=\infty\iff lim_{n\to\infty}\bar{d}(X_n, X_0)=\infty.\]
\end{theorem}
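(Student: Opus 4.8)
The plan is to prove the two equivalences
$d(X_n,X_0)\to\infty \Longleftrightarrow \delta_L(X_n,X_0)\to\infty$ and
$\overline{d}(X_n,X_0)\to\infty \Longleftrightarrow \delta_L(X_n,X_0)\to\infty$
separately; chaining them gives the threefold equivalence. Throughout I use that, by the very definitions, $\delta_L=\max\{d,\overline{d}\}$ and that $d,\overline{d}\ge 0$, so $d\le\delta_L$ and $\overline{d}\le\delta_L$ pointwise. This already settles one direction of each equivalence: $d(X_n,X_0)\to\infty\Rightarrow\delta_L(X_n,X_0)\to\infty$ and $\overline{d}(X_n,X_0)\to\infty\Rightarrow\delta_L(X_n,X_0)\to\infty$.

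For the converses I argue by contradiction along a subsequence, exactly as in the preceding theorem, and reduce everything to Theorem \ref{th:com2}. Suppose $\delta_L(X_n,X_0)\to\infty$ but $d(X_n,X_0)\not\to\infty$, and pass to a subsequence with $d(X_{n_i},X_0)\le K$. This reads $l_{X_{n_i}}(\alpha)\ge e^{-K}l_{X_0}(\alpha)$ for every $\alpha\in\mathcal{B}\cup\mathcal{C}$. I would then show that all the $X_{n_i}$, together with $X_0$, lie in a single $\varepsilon_0$-relative $\epsilon$-thick part, and invoke Theorem \ref{th:com2} to get $\delta_L(X_{n_i},X_0)\le d(X_{n_i},X_0)+K'\le K+K'$, contradicting $\delta_L\to\infty$. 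The $\epsilon$-thick bound is immediate: for $\gamma\in\mathcal{C}$ one has $l_{X_{n_i}}(\gamma)\ge e^{-K}\,\mathrm{sys}(X_0)$. For the upper bound on boundary lengths I would use that the area is the fixed constant $-2\pi\chi(S)$: a boundary geodesic of large length $L$ has a one-sided collar whose width is forced down to order $1/L$, and such a thin collar produces an essential arc of length of order $1/L$; since the bound $d(X_{n_i},X_0)\le K$ controls every arc length from below by $e^{-K}$ times its fixed positive $X_0$-length, $L$ cannot be large. This is exactly the point at which the arcs $\mathcal{B}$ are indispensable.

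The symmetric converse, starting from $\overline{d}(X_{n_i},X_0)\le K$, is dual: now $l_{X_{n_i}}(\alpha)\le e^{K}l_{X_0}(\alpha)$ for all $\alpha$, so the bound on boundary lengths is immediate, and the point to establish is a lower bound on the systole. Here I would pass to the double, where by Proposition \ref{weak1} and Corollary \ref{co:double} the hypothesis gives $l_{X_{n_i}^d}(\gamma)\le e^{K}l_{X_0^d}(\gamma)$ for all $\gamma\in\mathcal{C}(S^d)$, and I would fix once and for all a finite family of simple closed curves $\delta_1,\dots,\delta_m$ filling $S^d$. If $X_{n_i}^d$ had a very short geodesic $\gamma$, the collar lemma would force every curve crossing $\gamma$ to have $X_{n_i}^d$-length of order $\log(1/l_{X_{n_i}^d}(\gamma))$; since the $\delta_j$ fill, $\gamma$ must cross some $\delta_j$, so $l_{X_{n_i}^d}(\delta_j)$ would be large, contradicting $l_{X_{n_i}^d}(\delta_j)\le e^{K}\max_j l_{X_0^d}(\delta_j)$. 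This bounds the systole below, hence places $X_{n_i}$ in a relative thick part, and Theorem \ref{th:com2} in its $\overline{d}$ form finishes the argument as before.

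The main obstacle is precisely these two confinement steps, namely turning a one-sided spectral bound into membership in a relative thick part. Compared with the preceding theorem, where the symmetric bound $d_L\le K$ delivered both inequalities at once, the genuinely new work is bounding the boundary lengths from above in the $d$-case and the systole from below in the $\overline{d}$-case. Both reduce to collar-lemma estimates — the fixed-area thin-collar estimate on one side, the filling-system estimate on the other — and once they are in place, Theorem \ref{th:com2}, which already packages the deep comparison of Choi and Rafi, does the rest.
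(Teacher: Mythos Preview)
Your argument is correct but takes a considerably longer route than the paper. The paper's proof is three lines: since $d\le\delta_L$, the implication $d(X_n,X_0)\to\infty\Rightarrow\delta_L(X_n,X_0)\to\infty$ is trivial; for the converse, one passes to the double via Corollaries \ref{co:double} and \ref{co:double2}, invokes the known equivalence $\delta_L(X_n^d,X_0^d)\to\infty\Leftrightarrow d(X_n^d,X_0^d)\to\infty$ for surfaces \emph{without} boundary (Liu \cite{Liu2001}, Papadopoulos--Th\'eret \cite{Papado-Th2007}), and transfers back. No confinement to a relative thick part is needed, and Theorem \ref{th:com2} is not invoked at all.

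Your route works, but the labor is front-loaded into the two confinement steps. For the $d$-case your area argument is sound---the maximal embedded half-collar of a boundary geodesic of length $L$ has hyperbolic area $L\sinh w_0\le 2\pi|\chi(S)|$, hence width $w_0\lesssim 1/L$, and at width $w_0$ the collar meets itself or another boundary, producing an essential arc of length $\le 2w_0$; combined with the uniform lower bound $\inf_{\alpha\in\mathcal{B}}l_{X_0}(\alpha)>0$ this caps $L$. (Your phrase ``width forced down to order $1/L$'' is correct only in this area sense; the standard collar lemma gives the much smaller width $\asymp e^{-L/2}$, which is not what you want here.) For the $\overline{d}$-case you already pass to the double to run the filling-system/collar-lemma argument, so you are not far from the paper's strategy anyway. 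What your approach buys is a proof that stays inside the framework of the present paper (collar estimates plus Theorem \ref{th:com2}) rather than quoting an external result; what the paper's approach buys is brevity and the observation that the doubling isometries reduce the whole statement to the already-settled boundaryless case.
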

\begin{proof} We only prove the first equivalence, the proof of the
second one being similar.

As $d(X_n, X_0)\le\delta_L(X_n, X_0)$, we have 
\[\lim_{n\to\infty}d(X_n,
X_0)=\infty\Rightarrow \lim_{n\to\infty}\delta_L(X_n,
X_0)=\infty.\]

Now suppose that $\lim_{n\to\infty}\delta_L(X_n, X_0)=\infty$, Then
by Corollary \ref{co:double2},  we have $\lim_{n\to\infty}\delta_L(X^d_n, X^d_0)=\infty$.
By a result proved in Liu \cite{Liu2001} and Papadopoulos \& Th\'eret in 
  \cite{Papado-Th2007}, we have $\lim_{n\to\infty}d(X^d_n,
X^d_0)=\infty$. By Corollary \ref{co:double}, we obtain $\lim_{n\to\infty}d(X_n,
X_0)=\infty $.
\end{proof}

We now give a relation between  $\delta_L$ when $d_L$ in the small range.
 
 \begin{theorem}
Let $S$ be a surface of topologically finite type and  
let $(X_n)_{n\geq0}$  be a sequence of elements in  $\mathcal {T}(S)$.
Then,
\[\lim_{n\rightarrow \infty}d_L(X_n, X_0) = 0
\ \textmd{if} \ \textmd{and} \ \textmd{only} \ \textmd{if} \
\lim_{n\rightarrow \infty}\delta_L(X_n, X_0) = 0.\]

\end{theorem}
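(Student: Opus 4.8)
The plan is to prove the two implications separately, the reverse being immediate and the forward one requiring the relative-thick machinery built above. It is convenient to record that, since $\delta_L(X_n,X_0)=\sup_{\gamma\in\mathcal B\cup\mathcal C}|\rho_n(\gamma)|$ and $d_L(X_n,X_0)=\sup_{\gamma\in\mathcal C}|\rho_n(\gamma)|$, where $\rho_n(\gamma)=\log\big(l_{X_n}(\gamma)/l_{X_0}(\gamma)\big)$, the statement is exactly that $\sup_{\mathcal C}|\rho_n|\to 0$ if and only if $\sup_{\mathcal B\cup\mathcal C}|\rho_n|\to 0$. For the direction $\delta_L(X_n,X_0)\to 0\Rightarrow d_L(X_n,X_0)\to 0$ I would simply invoke the inequality $d_L(X,Y)\le\delta_L(X,Y)$ (valid because $\mathcal C\subset\mathcal B\cup\mathcal C$), already used above in the proof that $d_L$ is a metric; this makes the implication trivial.

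The substance is the forward implication. First I would observe that, since $d_L(X_n,X_0)\to 0$, for all large $n$ we have $d_L(X_n,X_0)\le 1$, so every closed-geodesic length ratio between $X_n$ and $X_0$ lies in $[e^{-1},e]$. Taking $\epsilon$ to be $e^{-1}$ times the $X_0$-systole and $\varepsilon_0$ to be $e$ times the largest $X_0$-length of a boundary component places all these $X_n$, together with $X_0$, in a single $\varepsilon_0$-relative $\epsilon$-thick part; hence the constants coming from Lemma \ref{Choi}, and in particular the additive comparison from its proof, are uniform along the whole sequence.

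Since closed curves are already controlled by hypothesis, it remains to control arcs uniformly. I would split $\mathcal B$ according to the value of $l_{X_0}(\beta)$ at a threshold $M$. For arcs with $l_{X_0}(\beta)>M$, using the associated closed curve $\alpha=\alpha(\beta)\in\mathcal C$ and the additive estimate $|l_X(\beta)-\tfrac12 l_X(\alpha)|\le K$ (respectively $|l_X(\beta)-l_X(\alpha)|\le 2K$) from the proof of Lemma \ref{Choi}, both $l_{X_0}(\alpha)$ and $l_{X_n}(\alpha)$ are at least of order $M$, so $\rho_n(\beta)=\rho_n(\alpha)+O(K/M)$ and therefore $|\rho_n(\beta)|\le d_L(X_n,X_0)+O(K/M)$ uniformly over all long arcs. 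For arcs with $l_{X_0}(\beta)\le M$ there are only finitely many homotopy classes (geodesic arcs of bounded length in the fixed surface $X_0$ being finite in number), and for each of them $l_{X_n}(\beta)\to l_{X_0}(\beta)$, hence $\rho_n(\beta)\to 0$ over this finite set. Given $\eta>0$, I would first fix $M$ so large that $O(K/M)<\eta/2$, then take $n$ large enough that $d_L(X_n,X_0)<\eta/2$ and that the finitely many bounded arcs all satisfy $|\rho_n(\beta)|<\eta$; working throughout with $|\rho_n|$ handles simultaneously the supremum and its reciprocal, and yields $\sup_{\gamma\in\mathcal B\cup\mathcal C}|\rho_n(\gamma)|<\eta$, i.e. $\delta_L(X_n,X_0)\to 0$.

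The main obstacle is the input needed for the finite family of bounded-length arcs, namely that $l_{X_n}(\beta)\to l_{X_0}(\beta)$ for each fixed $\beta$. This is where I must upgrade the uniform closed-curve control to genuine convergence in the Teichm\"uller topology: I would argue that $d_L(X_n,X_0)\to 0$ forces $X_n\to X_0$ in $\mathcal T(S)$, either by citing that the length-spectrum metric is topologically equivalent to the Teichm\"uller metric (Liu \cite{Liu2001}), or directly by choosing a pants decomposition together with transverse curves whose closed geodesic length functions form continuous Fenchel--Nielsen-type coordinates, which must converge once all closed-curve lengths converge uniformly; continuity of the arc-length functions on $\mathcal T(S)$ then gives the required pointwise convergence. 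The delicate point to get right is the interplay between the additive constant $K$, which dominates for short arcs but becomes negligible for long arcs, and the length threshold $M$, arranged so that the two regimes together exhaust $\mathcal B$.
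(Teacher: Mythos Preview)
Your argument is correct and takes a genuinely different route from the paper's. The paper passes to the double: from $d_L(X_n,X_0)\to 0$ it deduces $X_n\to X_0$ in the usual topology (exactly as you do), hence $X_n^d\to X_0^d$, and then invokes the result of \cite{Papado-Th2007} for surfaces \emph{without} boundary to obtain $\sup_{\gamma\in\mathcal C(S^d)}l_{X_n^d}(\gamma)/l_{X_0^d}(\gamma)\to 1$ and its reciprocal; restricting to symmetric curves then gives $\delta_L(X_n,X_0)\to 0$ via Corollary~\ref{co:double2}. Your approach stays on $S$, first pinning the whole sequence in a fixed $\varepsilon_0$-relative $\epsilon$-thick part so that the constants of Lemma~\ref{Choi} are uniform, and then splitting $\mathcal B$ into long arcs (handled by the additive estimate $|l_X(\beta)-c\,l_X(\alpha)|\le K'$ of Lemma~\ref{Choi}) and a finite set of short arcs (handled by continuity of arc-lengths once $X_n\to X_0$). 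This is more self-contained in that it avoids the external result from \cite{Papado-Th2007}, at the price of extracting a bit more from the proof of Lemma~\ref{Choi} than the multiplicative statement $l_X(\beta)\asymp l_X(\alpha)$ alone.

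One point to be careful with: in Case~2 of Lemma~\ref{Choi} (both endpoints of $\beta$ on the same boundary component, both complementary curves non-peripheral) the curve $\alpha$ chosen in the proof is the \emph{$X$-longer} of the two candidates $\alpha,\alpha'$, so strictly speaking $\alpha$ may depend on $X$. Your write-up assumes a single $\alpha=\alpha(\beta)$. The fix is painless: the additive estimate actually gives $\bigl|\,l_X(\beta)-\max(l_X(\alpha),l_X(\alpha'))\,\bigr|\le 2K$ uniformly on the relative thick part, and since $\bigl|\log\max(a_n,a_n')-\log\max(a_0,a_0')\bigr|\le\max(|\rho_n(\alpha)|,|\rho_n(\alpha')|)\le d_L(X_n,X_0)$, your conclusion $|\rho_n(\beta)|\le d_L(X_n,X_0)+O(K/M)$ follows exactly as written. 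With that small adjustment the proof is complete.
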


\begin{proof} That $\lim_{n\rightarrow \infty}\delta_L(X_n, X_0) = 0
\Rightarrow
\lim_{n\rightarrow \infty}d_L(X_n, X_0) = 0$ follows from the inequality $d_L\leq \delta_L$. We prove the converse.

Assume that $\lim_{n\rightarrow \infty}d_L(X_n, X_0) = 0$. Equivalently, we have
\[\sup_{\alpha\in\mathcal{C}(S)}\frac{l_{X_{0}}(\alpha)}{l_{X_{n}}(\alpha)}\to 1\hbox{ and }
\sup_{\alpha\in\mathcal{C}(S)}\frac{l_{X_{n}}(\alpha)}{l_{X_{0}}(\alpha)}\to 1 \hbox{ as $n\to\infty$}.\]

Using the fact that 
\[\inf_{\alpha\in\mathcal{C}(S)}\frac{l_{X_{n}}(\alpha)}{l_{X_{0}}(\alpha)}=
\left(\sup_{\alpha\in\mathcal{C}(S)}\frac{l_{X_{0}}(\alpha)}{l_{X_{n}}(\alpha)}\right)^{-1},\]
we have

 \begin{eqnarray*}
\lim_{n\rightarrow \infty}d_L(X_n, X_0) = 0&\iff &\displaystyle
\inf_{\alpha\in\mathcal{C}(S)}\frac{l_{X_{n}}(\alpha)}{l_{X_{0}}(\alpha)}\to 1, \ 
\sup_{\alpha\in\mathcal{C}(S)}\frac{l_{X_{n}}(\alpha)}{l_{X_{0}}(\alpha)}\to 1 \hbox{ as $n\to\infty$}\\
&\Rightarrow& l_{X_{n}}(\alpha)\to l_{X_{n}}(\alpha)\  \forall\alpha\in\mathcal{C}(S)\\
&\Rightarrow&X_n\to X_0 \hbox { in the usual topology of $\mathcal{T}(S)$}.
\end{eqnarray*}
 This implies that $X_n^d\to X_0^d$ as $n\to \infty$, using, for instance, Fenchel-Nielsen coordinates in the Teichm\"uller space of the doubled surface.
 
Thus, we have
\begin{equation}\label{eq:conv}d_L(X_n^d,X_0^d)\to 0 \hbox{ as $n\to\infty$},
\end{equation} where $d_L$ denotes here the length-spectrum distance in the double.
This last limit follows because the doubled surface is a surface without boundary
which implies, by a result in \cite{Papado-Th2007} (Corollary 6 p. 495), that 
\[X_n^d\to X_0^d\iff 
\sup_{\alpha\in\mathcal{C}(S^d)}\frac{l_{X_{0}^{d}}(\alpha)}{l_{X_{n}^{d}}(\alpha)}\to 1
\iff 
\sup_{\alpha\in\mathcal{C}(S^d)}\frac{l_{X_{n}^{d}}(\alpha)}{l_{X_{0}^{d}}(\alpha)}\to 1 \hbox{ as $n\to\infty$}.\]

We now deduce from (\ref{eq:conv}) that 
\begin{equation}\label{eq:conv1}
\inf_{\alpha\in\mathcal{C}(S^d)}\frac{l_{X_{n}^{d}}(\alpha)}{l_{X_{0}^{d}}(\alpha)}\to 1 \hbox{ and } 
\sup_{\alpha\in\mathcal{C}(S^d)}\frac{l_{X_{n}^{d}}(\alpha)}{l_{X_{0}^{d}}(\alpha)}\to 1 \hbox{ as $n\to\infty$.}
\end{equation}

Now we note that
\begin{equation}\label{eq:conv2}
\inf_{\alpha\in\mathcal{C}(S^d)}\frac{l_{X_{n}^{d}}(\alpha)}{l_{X_{0}^{d}}(\alpha)}\leq
\inf_{\alpha^d\in\mathcal{C}(S^d)}\frac{l_{X_{n}^{d}}(\alpha^d)}{l_{X_{0}^{d}}(\alpha^d)}\leq
\sup_{\alpha^d\in\mathcal{C}(S^d)}\frac{l_{X_{n}^{d}}(\alpha^d)}{l_{X_{0}^{d}}(\alpha^d)}\leq
\sup_{\alpha\in\mathcal{C}(S^d)}\frac{l_{X_{n}^{d}}(\alpha)}{l_{X_{0}^{d}}(\alpha)}
\end{equation}
where the notation $\alpha^d$ for a simple closed curve in $S^d$ means, as before, that we consider a symmetric curve with respect to the natural involution.

From  (\ref{eq:conv1}) and  (\ref{eq:conv2}) we now deduce that
 \begin{eqnarray*}
 \lim_{n\to\infty}d_L(X_n,X_0)=0&\Rightarrow&
 \inf_{\alpha^d\in\mathcal{C}(S^d)}\frac{l_{X_{n}^{d}}(\alpha^d)}{l_{X_{0}^{d}}(\alpha^d)}\to 1, 
\sup_{\alpha^d\in\mathcal{C}(S^d)}\frac{l_{X_{n}^{d}}(\alpha^d)}{l_{X_{0}^{d}}(\alpha^d)}\to 1 \hbox{ as $n\to\infty$}\\
&\Rightarrow &\displaystyle
\inf_{\alpha\in\mathcal{B}(S)}\frac{l_{X_{n}}(\alpha)}{l_{X_{0}}(\alpha)}\to 1, 
\sup_{\alpha\in\mathcal{B}(S)}\frac{l_{X_{n}}(\alpha)}{l_{X_{0}}(\alpha)}\to 1 \hbox{ as $n\to\infty$}.
\end{eqnarray*}

This shows that $\delta_L(X_n, X_0) = 0$ as $n\to\infty$, which completes the proof.

\end{proof}

We also have the following

\begin{theorem}Let $S$ be a surface of topologically finite type and
let $(X_n)_{n\ge 0}$ be a sequence of elements in $\mathcal{T}(S)$.
Then, we have
\[\lim_{n\to\infty}d(X_n, X_0)=0 \iff
\lim_{n\to\infty}\bar{d}(X_n, X_0)=0 \iff 
 \lim_{n\to\infty}\delta_L(X_n, X_0)=0.\]
\end{theorem}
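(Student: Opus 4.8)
The plan is to reduce the entire statement to a single algebraic identity together with the doubling isometries already established. First I would record that, directly from the definitions,
\[
\delta_L(X,Y)=\log\max\Big\{\sup_{\gamma\in\mathcal{B}\cup\mathcal{C}}\frac{l_X(\gamma)}{l_Y(\gamma)},\ \sup_{\gamma\in\mathcal{B}\cup\mathcal{C}}\frac{l_Y(\gamma)}{l_X(\gamma)}\Big\}=\max\{d(X,Y),\overline{d}(X,Y)\}.
\]
Consequently $\delta_L(X_n,X_0)\to 0$ holds if and only if both $d(X_n,X_0)\to 0$ and $\overline{d}(X_n,X_0)\to 0$: the forward implication because each of $d,\overline{d}$ is dominated by $\delta_L$, and the backward implication because $\delta_L$ is their maximum. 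This reduces the whole theorem to the single equivalence $d(X_n,X_0)\to 0\iff\overline{d}(X_n,X_0)\to 0$, after which all three conditions are equivalent.

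For this remaining equivalence I would pass to the double. By Corollary \ref{co:double} we have $d(X_n,X_0)=d(X_n^d,X_0^d)$ and $\overline{d}(X_n,X_0)=\overline{d}(X_n^d,X_0^d)$, so it suffices to argue on the closed surface $S^d$. Since $S^d$ has empty boundary, the set $\mathcal{B}(S^d)$ is empty, and $d,\overline{d}$ on $S^d$ are exactly Thurston's asymmetric weak metrics expressed as suprema over $\mathcal{C}(S^d)$. I would then invoke the result of Papadopoulos--Th\'eret (\cite{Papado-Th2007}, Corollary 6, p.~495), already used earlier in this section, which states that for a surface without boundary
\[
X_n^d\to X_0^d\iff \sup_{\alpha\in\mathcal{C}(S^d)}\frac{l_{X_0^d}(\alpha)}{l_{X_n^d}(\alpha)}\to 1\iff \sup_{\alpha\in\mathcal{C}(S^d)}\frac{l_{X_n^d}(\alpha)}{l_{X_0^d}(\alpha)}\to 1,
\]
that is, $d(X_n^d,X_0^d)\to 0\iff X_n^d\to X_0^d\iff\overline{d}(X_n^d,X_0^d)\to 0$. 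Chaining these equivalences with the two doubling identities gives $d(X_n,X_0)\to 0\iff\overline{d}(X_n,X_0)\to 0$, and combining with the first paragraph closes the three-way equivalence.

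The point requiring care, rather than a genuine obstacle, is the a priori asymmetry of $d$ and $\overline{d}$: convergence to $0$ in one does not formally force convergence in the other, and the reason it does is geometric, namely that on a surface without boundary both of Thurston's weak metrics induce the usual topology of Teichm\"uller space. The doubling construction is precisely what imports this fact to the bordered setting, and Corollary \ref{co:double} guarantees the reduction is an exact equality with no loss. Note in particular that, in contrast with Theorems \ref{th:com} and \ref{th:com2}, no relative-thick-part hypothesis is needed here, since the doubling identities hold everywhere on $\mathcal{T}(S)$.
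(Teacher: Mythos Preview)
Your proof is correct and follows essentially the same approach as the paper: reduce to the equivalence $d(X_n,X_0)\to 0\iff\overline{d}(X_n,X_0)\to 0$ via the identity $\delta_L=\max\{d,\overline{d}\}$, then establish that equivalence by doubling (Corollary \ref{co:double}) and invoking the known result for surfaces without boundary. The paper's own proof is terser---it cites both \cite{Liu2001} and \cite{Papado-Th2007} for the boundaryless case and calls the remaining step ``obvious''---but the structure and the key ingredients are the same.
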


\begin{proof} The first equivalence follows by taking doubles, and using again the results in \cite{Liu2001} and \cite{Papado-Th2007}. The second equivalence is obvious.
\end{proof}

\begin{corollary}
Let $S$ be a surface of topologically finite
type. 
Then $d$, $\overline{d}$, $d_L$ and $\delta_L$ induce the same topology on
$\mathcal {T}(S)$.
\end{corollary}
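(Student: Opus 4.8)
The plan is to reduce the statement to a comparison of convergent sequences and then to invoke the convergence theorems proved just above. Recall that each of $d$, $\overline{d}$, $d_L$ and $\delta_L$ induces a topology admitting, at every point $X_0$, the countable neighbourhood basis given by the balls of radius $1/k$, $k\in\mathbb{N}$; hence all four topologies are first countable. For first countable topologies on a fixed set, two topologies coincide as soon as they have the same convergent sequences with the same limits, because a subset is closed for such a topology precisely when it contains the limit of every one of its convergent sequences. Thus it suffices to show that, for an arbitrary sequence $(X_n)_{n\geq 0}$ in $\mathcal{T}(S)$, the four statements $d(X_n,X_0)\to 0$, $\overline{d}(X_n,X_0)\to 0$, $d_L(X_n,X_0)\to 0$ and $\delta_L(X_n,X_0)\to 0$ are mutually equivalent.

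First I would assemble this equivalence from the theorems already established in this section. The theorem asserting $\lim_{n\to\infty}d_L(X_n,X_0)=0 \iff \lim_{n\to\infty}\delta_L(X_n,X_0)=0$ links $d_L$ to $\delta_L$, while the theorem asserting $\lim_{n\to\infty}d(X_n,X_0)=0 \iff \lim_{n\to\infty}\overline{d}(X_n,X_0)=0 \iff \lim_{n\to\infty}\delta_L(X_n,X_0)=0$ links $d$ and $\overline{d}$ to $\delta_L$. Chaining these two statements through the common term $\delta_L$ yields exactly the required fourfold equivalence of convergences to $X_0$. Since $d_L$ and $\delta_L$ are genuine metrics (as shown in the proposition of the previous subsection), the topology each induces is the usual metric topology determined by its convergent sequences.

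The one point requiring care concerns the two genuinely asymmetric objects $d$ and $\overline{d}$, for which the notion of \emph{the} induced topology must be pinned down. Since $\overline{d}(X,Y)=d(Y,X)$, the forward balls of $d$ are the backward balls of $\overline{d}$ and conversely, so a priori $d$ carries two candidate topologies, one for each orientation of its arguments. The fourfold equivalence of the previous paragraph dissolves this ambiguity: it shows that $d(X_n,X_0)\to 0$ holds if and only if $\overline{d}(X_n,X_0)=d(X_0,X_n)\to 0$, so forward and backward convergence of a sequence to a given limit coincide, and the induced topology is unambiguous and equal to that of the honest metric $\delta_L$.

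I expect the only delicate point to be precisely this reconciliation of the asymmetric weak metrics with a bona fide topology; once it is granted that convergence of sequences is the right invariant to track (by first countability) and that the asymmetry collapses at the level of convergent sequences, the corollary follows immediately by transitivity through $\delta_L$. No new geometry or estimate is needed beyond the convergence theorems cited, which already encode, via the passage to the double $S^d$, all the analytic content.
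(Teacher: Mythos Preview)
Your proposal is correct and follows exactly the route the paper intends: the corollary is stated without proof there, as an immediate consequence of the two preceding convergence theorems, and you have simply made explicit the first-countability reduction and the resolution of the forward/backward ambiguity for the asymmetric weak metrics that the paper leaves tacit.
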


\end{document}